\documentclass{article}
\usepackage{amsfonts}
\usepackage{amsmath}

\newtheorem{theorem}{Theorem}

\newtheorem{corollary}[theorem]{Corollary}

\newtheorem{definition}[theorem]{Definition}

\newtheorem{lemma}[theorem]{Lemma}

\newtheorem{remark}[theorem]{Remark}

\newenvironment{proof}[1][Proof]{\noindent\textbf{#1.} }{\ \rule{0.5em}{0.5em}}

\begin{document}

\title{Exponential generating functions for Apostol-Dedekind and related sums}
\author{Yilmaz Simsek \\
Department of Mathematics, Faculty of Science,\\ University of Akdeniz,Antalya, Türkiye;\\ ysimsek@akdeniz.edu.tr;\\orcid:0000-0002-0611-7141}
\maketitle

\begin{abstract}
One of the primary objectives of this paper is to construct generating functions for Apostol-Dedekind sums. These exponential
generating functions greatly facilitate the discovery of new formulas and relations involving these sums. Other primary objectives of this paper are
to define Apostol-Dedekind sums of higher-order and other certain family of
finite sums, and to construct their generating functions. These generating
functions include the Bernoulli polynomials and numbers, the Euler
polynomials and numbers, the Array polynomials, the Stirling numbers, the
Carlitz polynomials with their partial derivative equations, and the Hardy
sums etc. Moreover, by using these functions, many novel formulas for the
Apostol-Dedekind sums of higher-order are derived.

\textbf{Keywords.} Generating function, Dedekind sums, Bernoulli polynomials, Euler
polynomials, Carlitz polynomials

\textbf{MSC.2020} 05A15, 11F20, 11B68

\end{abstract}

\section{Introduction}

We have not encountered in literature on the study of the Apostol-Dedekind
sums using exponential generating functions, yet. Gessel \cite{Gessel}
defined generating functions for the generalized Dedekind sums by using
partial fraction expansions, the logarithmic derivatives of the
factorizations a rational function defined for every root of unity. By
lattice point enumeration of rational polytopesand finite Fourier series
methods, Beck and Robins \cite{BeckS} defined the Fourier-Dedekind sums. We 
\cite{SimsekJMAA} constructed generating functions for the $q$-Dedekind sums.

The motivation of this paper is to construct exponential generating
functions for the Apostol-Dedekind sums, $S_{p}(\ell _{1},\ell _{2})$,
defined by Apostol \cite{Apostol}-\cite{Apostol1}:%
\begin{eqnarray}
S_{p}(\ell _{1},\ell _{2}) &=&\sum_{m=1}^{\ell _{2}-1}\frac{m}{\ell _{2}}\,%
\overline{B}_{p}\left( \frac{\ell _{1}m}{\ell _{2}}\right)  \label{1} \\
&=&\sum_{m=1}^{\ell _{2}-1}\frac{m}{\ell _{2}}\,B_{p}\left( \frac{\ell _{1}m%
}{\ell _{2}}-\left[ \frac{\ell _{1}m}{\ell _{2}}\right] \right) ,  \notag
\end{eqnarray}%
where $\ell _{1}\in \mathbb{Z}$, $\ell _{2}\in \mathbb{N}$ with $\left( \ell
_{1},\ell _{2}\right) =1$; $p\in \mathbb{N}_{0}=\mathbb{N}\cup \left\{
0\right\} $; $\left[ \alpha \right] $ is the greatest integer $\leq \alpha $%
; $\overline{B}_{p}\left( \alpha \right) =B_{p}\left( \alpha -\left[ \alpha %
\right] \right) $ denotes the Bernoulli function and $B_{p}\left( \alpha
\right) $ denotes the Bernoulli polynomials, defined by 
\begin{equation}
g(w,\alpha )=\frac{w}{e^{w}-1}e^{w\alpha }=\sum_{p=0}^{\infty }B_{p}(\alpha )%
\frac{w^{p}}{p!}.  \label{Bp}
\end{equation}

Exponential generating function for $S_{p}(h,k)$ can be defined as follows:

\begin{definition}
Let $\ell _{1},\ell _{2}\in \mathbb{N}$ with $(\ell _{1},\ell _{2})=1$. We
define%
\begin{equation}
H(w;\left( \ell _{1},\ell _{2}\right) )=\sum_{p=0}^{\infty }S_{p}(\ell
_{1},\ell _{2})\frac{w^{p}}{p!}.  \label{2}
\end{equation}
\end{definition}

We prove that the function $H(w;\left( \ell _{1},\ell _{2}\right) )$ can be
given by the following theorem:

\begin{theorem}
\label{Th-3a} Let $\ell _{1}\in \mathbb{Z}$ and $\ell _{2}\in \mathbb{N}$
with $(\ell _{1},\ell _{2})=1$. Then%
\begin{equation}
H(w;\left( \ell _{1},\ell _{2}\right) )=\frac{1}{\ell _{2}}\sum_{m=1}^{\ell
_{2}-1}mg\left( w,\frac{\ell _{1}m}{\ell _{2}}-\left[ \frac{\ell _{1}m}{\ell
_{2}}\right] \right) ,  \label{3a}
\end{equation}
\end{theorem}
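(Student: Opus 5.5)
The plan is to substitute the definition (\ref{1}) of $S_{p}(\ell _{1},\ell _{2})$ into the series (\ref{2}) and interchange the two summations. Since the sum over $m$ is finite, running from $1$ to $\ell _{2}-1$, the interchange needs no justification beyond the convergence of each individual power series in $w$. Write $\alpha _{m}=\frac{\ell _{1}m}{\ell _{2}}-\left[ \frac{\ell _{1}m}{\ell _{2}}\right] \in \lbrack 0,1)$. Then
\begin{equation*}
H(w;\left( \ell _{1},\ell _{2}\right) )=\sum_{p=0}^{\infty }\left( \sum_{m=1}^{\ell _{2}-1}\frac{m}{\ell _{2}}B_{p}(\alpha _{m})\right) \frac{w^{p}}{p!}=\frac{1}{\ell _{2}}\sum_{m=1}^{\ell _{2}-1}m\sum_{p=0}^{\infty }B_{p}(\alpha _{m})\frac{w^{p}}{p!}.
\end{equation*}

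Next I apply the generating function (\ref{Bp}) with $\alpha =\alpha _{m}$ to each inner series. This replaces the inner sum by $g(w,\alpha _{m})$, which is exactly the right-hand side of (\ref{3a}).

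The only point needing care is convergence. For each fixed $\alpha$, the series in (\ref{Bp}) converges for $|w|<2\pi$, the nearest singularities of $w/(e^{w}-1)$ being at $w=\pm 2\pi i$. A finite sum of series convergent on the same disc may be rearranged termwise there, so the identity holds as an equality of analytic functions on $|w|<2\pi$, and also as formal power series. The hypotheses $\ell _{1}\in \mathbb{Z}$ and $(\ell _{1},\ell _{2})=1$ enter only through the definition of $S_{p}$. I therefore expect no genuine obstacle: the interchange of the finite sum with the power series is the single step to state explicitly.
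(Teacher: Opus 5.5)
Your proposal is correct and takes essentially the same route as the paper. In its proof of (\ref{A}), to which the proof of this theorem defers, the paper substitutes (\ref{1}) into (\ref{2}), rewrites $\overline{B}_{p}$ via the fractional part, interchanges the finite $m$-sum with the $p$-series, and applies (\ref{Bp}); your explicit note that everything converges on $|w|<2\pi$ is a small addition the paper leaves implicit.
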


In the next sections, we show that there are many alternating generating
functions for the Apostol-Dedekind sums, $S_{p}(\ell _{1},\ell _{2})$. Our generating
functions can be written in terms of the Carlitz polynomials with their
partial derivative equations, the Euler numbers and polynomials, and the
Stirling numbers. These generating functions are also related to the , Hardy
sum's $s_{5}\left( \ell _{1},\ell _{2}\right) $. By using these generating
functions, we derive many novel formulas for the Apostol-Dedekind sums, $%
S_{p}(h,k)$.

We construct generating functions for Apostol-Dedekind sums of order $k$, denoted by $S_{p}^{(k)}(\ell _{1},\ell _{2}).$  By using equation (\ref{ah2}), we have the following results for these sums.

For $k=1$, $$S_{p}(\ell _{1},\ell _{2})=S_{p}^{(1)}(\ell _{1},\ell _{2}).$$

For $k=2$, $$S_{p}^{(2)}(\ell _{2},\ell _{2})=\sum_{j=0}^{p}\binom{p}{j}S_{j}(\ell _{1},\ell _{2})S_{p-j}(\ell _{1},\ell _{2}).$$

In the next sections, we show that there are many alternating generating
functions for the Apostol-Dedekind sums, $S_{p}(h,k)$. Our generating
functions can be written in terms of the Carlitz polynomials with their
partial derivative equations, the Euler numbers and polynomials, and the
Stirling numbers. These generating functions are also related to the , Hardy
sum's $s_{5}\left( \ell _{1},\ell _{2}\right) $. By using these generating
functions, we derive many novel formulas for the Apostol-Dedekind sums, $%
S_{p}(h,k)$.

We now briefly summarize our results as follows:

In Section 2, some basic definitions and notations are given.

In Section 3, we give generating functions for the Apostol-Dedekind $%
S_{p}(\ell _{1},\ell _{2})$ and sums $\mathcal{A}_{p}\left( \ell _{1},\ell
_{2}\right) $ which related to the Dedekind sums. We give proof of Theorem %
\ref{Th-3a}.

In Section 4, by the aid of these generating functions and partial
derivative of the Carlitz polynomials, we give some novel formulas for the
Apostol-Dedekind sums related to the Bernoulli polynomials and numbers, the
Euler polynomials and numbers, the Array polynomials, the Stirling numbers.

In Section 5, we construct generating functions not only for the
Apostol-Dedekind sum of order $k$, but also the sums $\mathcal{A}_{p}\left(
\ell _{1},\ell _{2}\right) $ of order $k$. Using these generating functions,
some novel formulas for the Apostol-Dedekind sum of order $k$ and the sums $%
\mathcal{A}_{p}\left( \ell _{1},\ell _{2}\right) $ of order $k$ are given.

\section{Preliminaries}

In this section, we give some basic definitions.

The generating function for the Bernoulli polynomials of order $k$ is given
by%
\begin{equation}
g^{k}(w,z)=\frac{w^{k}}{\left( e^{w}-1\right) ^{k}}e^{wz}=\sum_{p=0}^{\infty
}B_{p}^{(k)}(z)\frac{w^{p}}{p!},  \label{Bpk}
\end{equation}%
where $B_{p}(z)=B_{p}^{(1)}(z)$ and $B_{p}^{(k)}=B_{p}^{(k)}(0)$ (\textit{cf}%
. \cite{Milne}, \cite{Riardon}). The generating function for the Euler
polynomials is given by%
\begin{equation*}
h(w,z)=\frac{2}{e^{w}+1}e^{wz}=\sum_{p=0}^{\infty }E_{p}(z)\frac{w^{p}}{p!}
\end{equation*}

(\textit{cf}. \cite{Milne}, \cite{Riardon}). The generating function for the
Stirling numbers of the first kind is given by%
\begin{equation}
Fs1(w,k)=\ln ^{k}(1+w)=\sum_{p=0}^{\infty }\frac{k!}{p!}\left\langle
\begin{matrix}
	p\\
	k
\end{matrix}\right\rangle w^{p}  \label{Sn1}
\end{equation}
(\textit{cf}. \cite{Milne}, \cite{Riardon}). The generating function for the
Stirling numbers of the second kind is given by%
\begin{equation}
Fs2(w,k)=\left( e^{w}-1\right) ^{k}=\sum_{p=0}^{\infty }\frac{k!}{p!}\left\langle
\begin{matrix}
	p\\
	k
\end{matrix}\right\rangle w^{p}  \label{Sn}
\end{equation}
(\textit{cf}. \cite{Milne}, \cite{Riardon}). The generating function for the
array polynomials$\ $is given by%
\begin{equation}
\frac{\left( e^{w}-1\right) ^{v}e^{yw}}{v!}\sum_{a=0}^{\infty }S_{v}^{a}(y)%
\frac{w^{a}}{a!},  \label{Ap-1}
\end{equation}%
(\textit{cf}. \cite{Riardon}, \cite{SimsekFPTA}).

Some known properties of the Apostol-Dedekind sums, $S_{p}(\ell _{1},\ell
_{2})$ are given as follows,  see, for details (\textit{cf}. \cite{Apostol1,Apostol,Bayad,Beck,BeckArX,BeckS,BerndtDieter,BerndtGoldberg,gradimir,SimsekBOOK}): 

The reciprocity law of the Apostol-Dedekind sums is given by%
\begin{eqnarray*}
	\ell _{1}\ell _{2}{}^{p}S_{p}(\ell _{1},\ell _{2})+\ell _{2}\ell
	_{1}^{p}S_{p}(\ell _{2},\ell _{1}) 
	=\frac{p}{p+1}B_{p+1}+\frac{1}{p+1}\sum_{s=0}^{p+1}\binom{p+1}{s}%
	(-1)^{s}B_{s}B_{p+1-s}^{s}\ell _{1}^{s}\ell _{2}{}^{p+1-s}.
\end{eqnarray*}
When $p=0$ in (\ref{1}) we get%
\begin{equation}
	S_{0}(\ell _{1},\ell _{2})=\frac{1}{\ell _{2}}\sum_{m=1}^{\ell _{2}-1}m=%
	\frac{\ell _{2}-1}{2}.  \label{s0}
\end{equation}%
When $p=1$ in (\ref{1}), for $(\ell _{1},\ell _{2})=1$, we have the
classical Dedekind sum%
\begin{equation*}
	s(\ell _{1},\ell _{2}):=S_{1}(\ell _{1},\ell _{2}):
\end{equation*}%
\begin{equation}
	s(\ell _{1},\ell _{2})=\sum_{m=1}^{\ell _{2}-1}\frac{m}{\ell _{2}}\left( 
	\frac{\ell _{1}m}{\ell _{2}}-\left[ \frac{\ell _{1}m}{\ell _{2}}\right] -%
	\frac{1}{2}\right) ,  \label{s01}
\end{equation}%
where $\ell _{1}\in \mathbb{Z}$ and $\ell _{2}\in \mathbb{N}$ with $\left(
\ell _{1},\ell _{2}\right) =1$. Since%
\begin{equation*}
	\left( \left( x\right) \right) =\left\{ 
	\begin{array}{cc}
		x-\left[ x\right] -\frac{1}{2} & \text{if }x\notin \mathbb{Z} \\ 
		0 & \text{if }x\in \mathbb{Z},%
	\end{array}%
	\right.
\end{equation*}%
the $sum s(\ell _{1},\ell _{2})$ can also be given by
\begin{equation*}
	s(\ell _{1},\ell _{2})=\sum_{m=1}^{\ell _{2}-1}\left( \left( \frac{m}{\ell
		_{2}}\right) \right) \left( \left( \frac{m\ell _{1}}{\ell _{2}}\right).
	\right)
\end{equation*}

The reciprocity law of the Dedekind sums:

\begin{equation*}
	s\left( \ell _{1},\ell _{2}\right) +s\left( \ell _{2},\ell _{1}\right) =-%
	\frac{1}{4}+\frac{1}{12}\left( \frac{\ell _{1}}{\ell _{2}}+\frac{\ell _{2}}{%
		\ell _{1}}+\frac{1}{\ell _{1}\ell _{2}}\right) .
\end{equation*}

In this paper, we also need the following three and two-terms polynomial relations:
\begin{eqnarray*}
&&	(u-1)\sum_{x=1}^{a-1}u^{x-1}v^{\left[ \frac{bx}{a}\right] }w^{\left[ \frac{%
			cx}{a}\right] }+(v-1)\sum_{y=1}^{b-1}v^{y-1}w^{\left[ \frac{cy}{b}\right]
	}u^{\left[ \frac{ay}{b}\right] } +(w-1)\sum_{z=1}^{c-1}w^{z-1}u^{\left[ \frac{az}{c}\right] }v^{\left[ 
		\frac{bz}{c}\right] } \\
&=&u^{a-1}v^{b-1}w^{c-1}-1
\end{eqnarray*}%
and%
\begin{equation*}
	(u-1)\sum_{x=1}^{a-1}u^{x-1}v^{\left[ \frac{bx}{a}\right] }+(v-1)%
	\sum_{y=1}^{b-1}v^{y-1}u^{\left[ \frac{ay}{b}\right] }=u^{a-1}v^{b-1}-1
\end{equation*}%
(\textit{cf}. \cite{AV,BeckArX,BerndtDieter,Cetin,PS}).

The Carlitz polynomials are given by%
\begin{equation}
C(x_{1},x_{2},\ldots ,x_{n};\ell _{1},\ell _{2},\ldots ,\ell
_{n})=\sum_{v=1}^{\ell _{1}-1}x_{1}^{v-1}x_{2}^{\left[ \frac{v\ell _{2}}{%
\ell _{1}}\right] }\cdots x_{n}^{\left[ \frac{v\ell _{n}}{\ell _{1}}\right]
},  \label{cp}
\end{equation}%
where $x_{1},x_{2},\ldots ,x_{n}$ are indeterminates and $\ell _{1},\ell
_{2},\ldots ,\ell _{n}\in \mathbb{N}$, which are relatively prime in pairs (%
\textit{cf}. \cite{Beck}, see also \cite{Cetin}). $C(x_{1},x_{2};\ell
_{1},\ell _{2})$ known as the Dedekind--Carlitz polynomial (\textit{cf}. 
\cite{BeckArX}).

\section{Generating functions for the Apostol-Dedekind $S_{p}(\ell _{1},\ell
_{2})$}

In this section, we construct generating functions for the Apostol-Dedekind $%
S_{p}(\ell _{1},\ell _{2})$. In order to give these generating functions, we
start by the following lemmas:

\begin{lemma}
Let $\ell _{1},\ell _{2}\in \mathbb{N}$ with $(\ell _{1},\ell _{2})=1$. Let $%
x$ and $y$ are indeterminants. Then%
\begin{equation}
\sum_{k=1}^{\ell _{1}-1}kx^{k}y^{\left[ \frac{k\ell _{2}}{\ell _{1}}\right]
}=x^{2}\frac{\partial }{\partial x}\left\{ C(x,y;\ell _{1},\ell
_{2})\right\} +xC(x,y;\ell _{1},\ell _{2}).  \label{1cp}
\end{equation}
\end{lemma}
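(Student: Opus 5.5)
The identity is a termwise computation with the two-variable case of the Carlitz polynomial (\ref{cp}),
\begin{equation*}
C(x,y;\ell _{1},\ell _{2})=\sum_{k=1}^{\ell _{1}-1}x^{k-1}y^{\left[ \frac{k\ell _{2}}{\ell _{1}}\right] }.
\end{equation*}
This is a polynomial in $x$ and $y$, so partial differentiation in $x$ can be done term by term. The exponent $\left[ k\ell _{2}/\ell _{1}\right]$ of $y$ depends only on $k$, so $y$ plays no role in $\partial /\partial x$. The coprimality hypothesis $(\ell _{1},\ell _{2})=1$ is not needed for the computation; it only matches the setting in which $C$ is defined.

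First I will differentiate termwise:
\begin{equation*}
\frac{\partial }{\partial x}C(x,y;\ell _{1},\ell _{2})=\sum_{k=1}^{\ell _{1}-1}(k-1)x^{k-2}y^{\left[ \frac{k\ell _{2}}{\ell _{1}}\right] }.
\end{equation*}
The $k=1$ term is zero, so there is no issue with negative powers of $x$. Multiplying by $x^{2}$ gives
\begin{equation*}
x^{2}\frac{\partial }{\partial x}C=\sum_{k=1}^{\ell _{1}-1}(k-1)x^{k}y^{\left[ \frac{k\ell _{2}}{\ell _{1}}\right] },
\end{equation*}
and directly
\begin{equation*}
xC=\sum_{k=1}^{\ell _{1}-1}x^{k}y^{\left[ \frac{k\ell _{2}}{\ell _{1}}\right] }.
\end{equation*}

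Adding these two sums, the coefficient of $x^{k}y^{\left[ k\ell _{2}/\ell _{1}\right] }$ becomes $(k-1)+1=k$. This yields exactly the left-hand side of (\ref{1cp}).

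There is no genuine obstacle. The only care needed is the index bookkeeping: the shift between $x^{k-1}$ in $C$ and $x^{k}$ in the target is what forces both corrections, the factor $x^{2}$ on the derivative and the extra term $xC$. An equivalent check is the Euler operator identity $x\frac{\partial }{\partial x}(xC)=x^{2}\frac{\partial C}{\partial x}+xC$, applied to $xC=\sum_{k}x^{k}y^{\left[ k\ell _{2}/\ell _{1}\right] }$.
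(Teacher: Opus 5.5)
Your proof is correct and follows essentially the same route as the paper: termwise differentiation of $C(x,y;\ell_{1},\ell_{2})$ in $x$, then rescaling by powers of $x$ and regrouping. The paper multiplies by $x$ and splits $(k-1)x^{k-1}$ as $\frac{1}{x}kx^{k}-x^{k-1}$, while you multiply by $x^{2}$ and add $xC$ directly; your remark that coprimality plays no role in the computation is also accurate.
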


\begin{proof}
Using (\ref{cp}), we have%
\begin{equation}
C(x,y;\ell _{1},\ell _{2})=\sum_{k=1}^{\ell _{1}-1}x^{k-1}y^{\left[ \frac{%
k\ell _{2}}{\ell _{1}}\right] }.  \label{ah6}
\end{equation}%
Applying partial derivative operator $\frac{\partial }{\partial x}$ to the
polynomials $C(x,y;\ell _{1},\ell _{2})$ with respect to variable $x$, we get%
\begin{equation*}
\frac{\partial }{\partial x}\left\{ C(x,y;\ell _{1},\ell _{2})\right\}
=\sum_{k=1}^{\ell _{1}-1}\left( k-1\right) x^{k-2}y^{\left[ \frac{k\ell _{2}%
}{\ell _{1}}\right] }.
\end{equation*}%
Therefore,%
\begin{equation*}
x\frac{\partial }{\partial x}\left\{ C(x,y;\ell _{1},\ell _{2})\right\} =%
\frac{1}{x}\sum_{k=1}^{\ell _{1}-1}kx^{k}y^{\left[ \frac{k\ell _{2}}{\ell
_{1}}\right] }-\sum_{k=1}^{\ell _{1}-1}x^{k-1}y^{\left[ \frac{k\ell _{2}}{%
\ell _{1}}\right] },
\end{equation*}%
which gives the desired result.
\end{proof}

Some special values of Eq. (\ref{1cp}) are given as follows:

By using (\ref{1cp}), we define%
\begin{equation}
\sum_{k=1}^{\ell _{1}-1}kx^{k}y^{\left[ \frac{k\ell _{2}}{\ell _{1}}\right]
}=x^{2}P(x,y;\ell _{1},\ell _{2})+xC(x,y;\ell _{1},\ell _{2}),  \label{cp11}
\end{equation}%
where%
\begin{equation*}
P(x,y;\ell _{1},\ell _{2})=\frac{\partial }{\partial x}\left\{ C(x,y;\ell
_{1},\ell _{2})\right\} .
\end{equation*}%
Here we note that%
\begin{equation*}
P(\beta ,y;\ell _{1},\ell _{2})=\frac{\partial }{\partial x}\left\{
C(x,y;\ell _{1},\ell _{2})\right\} \left\vert _{x=\beta }\right. .
\end{equation*}%
Substituting $x=e^{\frac{\ell _{2}}{\ell _{1}}w}$ and $y=e^{-w}$ into (\ref%
{cp11}), we get

\begin{corollary}
Let $(\ell _{1},\ell _{2})=1$. Then%
\begin{equation}
\sum_{k=1}^{\ell _{1}-1}ke^{\left( \frac{k\ell _{2}}{\ell _{1}}-\left[ \frac{%
k\ell _{2}}{\ell _{1}}\right] \right) w}=e^{\frac{2\ell _{2}}{\ell _{1}}%
w}P\left( e^{\frac{\ell _{2}}{\ell _{1}}w},e^{-w};\ell _{1},\ell _{2}\right)
+e^{\frac{\ell _{2}}{\ell _{1}}w}C\left( e^{\frac{\ell _{2}}{\ell _{1}}%
w},e^{-w};\ell _{1},\ell _{2}\right) .  \label{cp1}
\end{equation}
\end{corollary}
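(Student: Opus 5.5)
The corollary follows from the identity (\ref{cp11}) by the substitution $x=e^{\frac{\ell _{2}}{\ell _{1}}w}$, $y=e^{-w}$. Since (\ref{cp11}) is an identity of polynomials in the indeterminates $x,y$, it remains valid after specializing to any complex numbers, in particular to these exponentials for each fixed $w$. I will treat the two sides separately.

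For the left-hand side of (\ref{cp11}), each summand becomes
\[
k\,x^{k}y^{\left[ \frac{k\ell _{2}}{\ell _{1}}\right] }=k\,e^{\frac{k\ell _{2}}{\ell _{1}}w}\,e^{-\left[ \frac{k\ell _{2}}{\ell _{1}}\right] w}=k\,e^{\left( \frac{k\ell _{2}}{\ell _{1}}-\left[ \frac{k\ell _{2}}{\ell _{1}}\right] \right) w},
\]
using only $e^{a}e^{b}=e^{a+b}$. Summing over $k=1,\ldots ,\ell _{1}-1$ gives exactly the left-hand side of (\ref{cp1}).

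On the right-hand side of (\ref{cp11}), the factor $x^{2}$ becomes $e^{\frac{2\ell _{2}}{\ell _{1}}w}$ and $x$ becomes $e^{\frac{\ell _{2}}{\ell _{1}}w}$. The term $C(x,y;\ell _{1},\ell _{2})$ becomes $C\left( e^{\frac{\ell _{2}}{\ell _{1}}w},e^{-w};\ell _{1},\ell _{2}\right)$ by direct evaluation.

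The only point needing care is the meaning of $P$. Here $P$ is the partial derivative in the first variable, computed \emph{before} substituting and then evaluated at $x=\beta =e^{\frac{\ell _{2}}{\ell _{1}}w}$ and $y=e^{-w}$, as in the remark $P(\beta ,y;\ell _{1},\ell _{2})=\frac{\partial }{\partial x}\{C\}|_{x=\beta }$. It is not a derivative with respect to $w$ of the composed function. With this reading, the right-hand side of (\ref{cp11}) becomes the right-hand side of (\ref{cp1}), and the corollary follows. The coprimality hypothesis is not actually used in this step; it is carried along from the definition of the Carlitz polynomials.
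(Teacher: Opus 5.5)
Your proposal is correct and matches the paper's argument exactly: the corollary comes from substituting $x=e^{\frac{\ell_2}{\ell_1}w}$ and $y=e^{-w}$ into the polynomial identity (\ref{cp11}), with $P$ read as $\partial C/\partial x$ evaluated at the substituted point. Your side remarks are also accurate: a polynomial identity may be specialized to any values, and coprimality is not needed for this step.
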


Putting $x=-1$ and $y=-1$ into (\ref{cp11}), we get%
\begin{equation*}
\sum_{k=1}^{\ell _{1}-1}k(-1)^{k+\left[ \frac{k\ell _{2}}{\ell _{1}}\right]
}=P(-1,-1;\ell _{1},\ell _{2})-C(-1,-1;\ell _{1},\ell _{2}).
\end{equation*}%
Combining the above equation with the Hardy sums $s_{5}(\ell _{1},\ell _{2})$%
, defined by%
\begin{equation*}
\sum_{k=1}^{\ell _{1}-1}k(-1)^{k+\left[ \frac{k\ell _{2}}{\ell _{1}}\right]
}=\ell _{1}s_{5}(\ell _{1},\ell _{2})
\end{equation*}%
for $(\ell _{1},\ell _{2})=1$ (\textit{cf}. \cite{BerndtGoldberg}, \cite%
{SimsekJNT}), we get the following result:

\begin{corollary}
Let $(\ell _{1},\ell _{2})=1$. Then%
\begin{equation*}
s_{5}(\ell _{2},\ell _{1})=\frac{P(-1,-1;\ell _{1},\ell _{2})-C(-1,-1;\ell
_{1},\ell _{2})}{\ell _{1}}.
\end{equation*}
\end{corollary}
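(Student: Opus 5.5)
The plan is to specialize the polynomial identity (\ref{cp11}) at $x=y=-1$ and then recognize the left-hand side as a Hardy sum. Since (\ref{cp11}) is an identity between polynomials in the indeterminates $x$ and $y$, we may substitute any numerical values, and $x=-1$, $y=-1$ is legitimate. It is also worth noting that the corollary needs no parity or other restriction on $\ell_1,\ell_2$ beyond $(\ell_1,\ell_2)=1$.

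\textbf{Step 1: specialize (\ref{cp11}).} On the left, $kx^{k}y^{[k\ell_2/\ell_1]}$ becomes $k(-1)^{k+[k\ell_2/\ell_1]}$. On the right, $x^{2}=1$ and $x=-1$, so we obtain
\begin{equation*}
\sum_{k=1}^{\ell_1-1}k(-1)^{k+\left[\frac{k\ell_2}{\ell_1}\right]}=P(-1,-1;\ell_1,\ell_2)-C(-1,-1;\ell_1,\ell_2),
\end{equation*}
where $P(-1,-1;\ell_1,\ell_2)$ is the derivative $\frac{\partial}{\partial x}C(x,y;\ell_1,\ell_2)$ evaluated at $(x,y)=(-1,-1)$. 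This derivative is well defined because $C$ is a polynomial.

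\textbf{Step 2: identify the Hardy sum.} We use the Berndt--Goldberg representation (\cite{BerndtGoldberg}, \cite{SimsekJNT}) of the Hardy sum: for coprime $h$ and $k$,
\begin{equation*}
s_5(h,k)=\frac{1}{k}\sum_{j=1}^{k-1}(-1)^{j+\left[\frac{hj}{k}\right]}\,j .
\end{equation*}
Take the modulus $k=\ell_1$, the multiplier $h=\ell_2$, and the summation index $j$ equal to our $k$. Then the left-hand side of Step 1 equals $\ell_1\,s_5(\ell_2,\ell_1)$. Dividing by $\ell_1$ gives the stated formula.

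\textbf{Expected obstacle.} The only real difficulty is keeping the argument order of $s_5$ straight. The displayed definition in the text writes the sum as $\ell_1 s_5(\ell_1,\ell_2)$, whereas under the standard convention the modulus is the second argument. In the sum above the modulus, i.e.\ the upper limit and the denominator inside the floor, is $\ell_1$ and the multiplier is $\ell_2$. The correct identification is therefore $s_5(\ell_2,\ell_1)$, which matches the corollary. I would state the standard convention explicitly and check this labeling carefully before concluding. Everything else is direct substitution.
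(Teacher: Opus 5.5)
Your route is the paper's. Step 1 is exactly its specialization of (\ref{cp11}) at $x=y=-1$. You are also right that the paper's displayed identity has the arguments of $s_5$ swapped: with the modulus in the second slot, the sum is $\ell_1 s_5(\ell_2,\ell_1)$.

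The gap is Step 2. The representation $s_5(h,k)=\frac{1}{k}\sum_{j=1}^{k-1}(-1)^{j+[hj/k]}j$ is not valid for all coprime $h,k$, and your remark that no parity restriction is needed is false. Put $f(j)=(-1)^{j+\left[\frac{hj}{k}\right]}$. The classical definition is $s_5(h,k)=\sum_{j=1}^{k}f(j)\left(\left(\frac{j}{k}\right)\right)$, and $\left(\left(\frac{j}{k}\right)\right)=\frac{j}{k}-\frac12$ for $1\le j\le k-1$. Hence
\begin{equation*}
s_5(h,k)=\frac{1}{k}\sum_{j=1}^{k-1}j\,f(j)-\frac{1}{2}\sum_{j=1}^{k-1}f(j).
\end{equation*}
For $1\le j\le k-1$ we have $\left[\frac{h(k-j)}{k}\right]=h-1-\left[\frac{hj}{k}\right]$, so $f(k-j)=(-1)^{h+k-1}f(j)$. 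The second sum therefore vanishes when $h$ and $k$ are both odd. When $h+k$ is odd it need not vanish, and in that case $s_5(h,k)=0$ identically, because $\left(\left(\frac{k-j}{k}\right)\right)=-\left(\left(\frac{j}{k}\right)\right)$. A concrete failure is $\ell_1=3$, $\ell_2=2$. Here $C(x,y;3,2)=1+xy$ and $P=y$, so the right-hand side of the corollary is $(-1-2)/3=-1$, while $s_5(2,3)=\frac16-\frac16=0$.

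So the problem is a missing hypothesis, not just the labeling check you anticipated. If $\ell_1,\ell_2$ are both odd, your argument is complete once you add the pairing $j\mapsto \ell_1-j$, which kills $\sum_j f(j)$. For arbitrary coprime $\ell_1,\ell_2$, the same computation gives the correct statement
\begin{equation*}
\frac{P(-1,-1;\ell_1,\ell_2)-C(-1,-1;\ell_1,\ell_2)}{\ell_1}=s_5(\ell_2,\ell_1)+\frac{1}{2}\sum_{j=1}^{\ell_1-1}(-1)^{j+\left[\frac{j\ell_2}{\ell_1}\right]}.
\end{equation*}
The paper's proof has the same defect, since it quotes the identity ``for $(\ell_1,\ell_2)=1$'' with no parity condition. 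The corollary holds as stated only if $s_5$ is simply defined by that finite sum, and then it is nothing more than your Step 1.
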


By applying the Cauchy-Euler operator $y\frac{\partial }{\partial y}$ to the
Carlitz polynomials $C(x,y;\ell _{1},\ell _{2})$ (given by Eq. (\ref{ah6})),
for $v,\ell \in \mathbb{N}_{0}$, we get%
\begin{eqnarray*}
Q_{v,\ell }(x,y;\ell _{1},\ell _{2}) &=&\left( x\frac{\partial }{\partial x}%
\right) ^{v}\left( y\frac{\partial }{\partial y}\right) ^{\ell }\left\{
C(x,y;\ell _{1},\ell _{2})\right\} \\
&=&\sum_{k=1}^{\ell _{1}-1}k^{v}\left[ \frac{k\ell _{2}}{\ell _{1}}\right]
^{\ell }x^{k}y^{\left[ \frac{k\ell _{2}}{\ell _{1}}\right] }.
\end{eqnarray*}%
We set%
\begin{equation*}
Q_{v,\ell }(x,y;\ell _{1},\ell _{2})=\left( x\frac{\partial }{\partial x}%
\right) ^{v}\left( y\frac{\partial }{\partial y}\right) ^{\ell }\left\{
C(x,y;\ell _{1},\ell _{2})\right\} .
\end{equation*}%
Some special values of the above polynomial are given by%
\begin{eqnarray*}
Q_{0,\ell }(1,1;\ell _{1},\ell _{2}) &=&\sum_{k=1}^{\ell _{1}-1}\left[ \frac{%
k\ell _{2}}{\ell _{1}}\right] ^{\ell }, \\
Q_{1,1}(1,1;\ell _{1},\ell _{2}) &=&\sum_{k=1}^{\ell _{1}-1}k\left[ \frac{%
k\ell _{2}}{\ell _{1}}\right] , \\
&&\vdots \\
Q_{v,\ell }(1,1;\ell _{1},\ell _{2}) &=&\sum_{k=1}^{\ell _{1}-1}k^{v}\left[ 
\frac{k\ell _{2}}{\ell _{1}}\right] ^{\ell }.
\end{eqnarray*}

We should emphasize that, with the help of the Carlitz polynomials, the
three and two-terms polynomial relations, it may be possible to calculate
the explicit computational formulas and values{}of the polynomial $Q_{v,\ell
}(x,y;\ell _{1},\ell _{2})$. However, this is the subject of another study
and are not be addressed here.

For instance,%
\begin{equation*}
Q_{0,\ell }(1,1;\ell _{1},\ell _{2})=\sum_{k=1}^{\ell _{1}-1}\left[ \frac{%
k\ell _{2}}{\ell _{1}}\right] ^{\ell }
\end{equation*}%
was studied by \cite{Broven}. He gave Faulhaber's formula and recurrence
relations for this sum.%
\begin{eqnarray*}
Q_{1,1}(1,1;\ell _{1},\ell _{2}) &=&\sum_{k=1}^{\ell _{1}-1}k\left[ \frac{%
k\ell _{2}}{\ell _{1}}\right] \\
&=&\ell _{1}s(\ell _{2},\ell _{1})+\frac{\ell _{2}(\ell _{1}-1)(2\ell _{1}-1)%
}{6\ell _{1}}+\frac{\ell _{1}-1}{4}
\end{eqnarray*}%
(\textit{cf}. \cite{BerndtDieter}, \cite{Mattew}, \cite{ascm2006}).

We set%
\begin{equation}
G(w,x,\ell _{2})=\frac{1}{\ell _{2}}\sum_{m=1}^{\ell _{2}-1}me^{\left( mx-%
\left[ mx\right] \right) w}=\sum_{p=0}^{\infty }\mathcal{J}_{p}\left( x,\ell
_{2}\right) \frac{w^{p}}{p!}  \label{Ypp}
\end{equation}%
Substituting $x=\frac{\ell _{1}}{\ell _{2}}$ with $\left( \ell _{1},\ell
_{2}\right) =1$ into (\ref{Ypp}), we get the following generating functions
for the Dedekind type sums:%
\begin{equation*}
F(w,\left( \ell _{1},\ell _{2}\right) ):=G\left( w,\frac{\ell _{1}}{\ell _{2}%
},\ell _{2}\right)
\end{equation*}%
and%
\begin{equation*}
\mathcal{A}_{p}\left( \ell _{1},\ell _{2}\right) :=\mathcal{J}_{p}\left( 
\frac{\ell _{1}}{\ell _{2}},\ell _{2}\right) .
\end{equation*}%
That is%
\begin{equation}
F(w,\left( \ell _{1},\ell _{2}\right) )=\frac{1}{\ell _{2}}\sum_{m=1}^{\ell
_{2}-1}me^{\left( \frac{\ell _{1}m}{\ell _{2}}-\left[ \frac{\ell _{1}m}{\ell
_{2}}\right] \right) w}=\sum_{p=0}^{\infty }\mathcal{A}_{p}\left( \ell
_{1},\ell _{2}\right) \frac{w^{p}}{p!},  \label{Yp}
\end{equation}%
where $\left( \ell _{1},\ell _{2}\right) =1$. By Eq. (\ref{Yp}), we obtain%
\begin{equation}
\mathcal{A}_{p}\left( \ell _{1},\ell _{2}\right) =\frac{1}{\ell _{2}}%
\sum_{m=1}^{\ell _{2}-1}m\left( \frac{\ell _{1}m}{\ell _{2}}-\left[ \frac{%
\ell _{1}m}{\ell _{2}}\right] \right) ^{p}.  \label{ah4a}
\end{equation}

By using (\ref{ah4a}), we get%
\begin{equation*}
\mathcal{A}_{p}\left( \ell _{1},\ell _{2}\right) =\frac{1}{\ell _{2}}%
\sum_{c=1}^{p}(-1)^{p-c}\binom{p}{c}\left( \frac{\ell _{1}}{\ell _{2}}%
\right) ^{p-c}\sum_{m=1}^{\ell _{2}-1}m^{p+1-c}\left[ \frac{\ell _{1}m}{\ell
_{2}}\right] ^{c}
\end{equation*}%
The sums $\mathcal{A}_{p}\left( \ell _{1},\ell _{2}\right) $ can be given in
terms of $Q_{v,\ell }(x,y;\ell _{1},\ell _{2})$ by the following corollary:

\begin{lemma}
Let $\ell _{1}\in \mathbb{Z}$, $p\in \mathbb{N}_{0}$, $\ell _{2}\in \mathbb{N%
}$ with $\left( \ell _{1},\ell _{2}\right) =1$. Then%
\begin{equation}
\mathcal{A}_{p}\left( \ell _{1},\ell _{2}\right) =\sum_{c=1}^{p}(-1)^{p-c}%
\binom{p}{c}\frac{\ell _{1}^{p-c}}{\ell _{2}^{p-c+1}}Q_{p+1-c,c}(1,1;\ell
_{1},\ell _{2}).  \label{ah7}
\end{equation}
\end{lemma}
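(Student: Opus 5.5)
The plan is to take the expansion of $\mathcal{A}_{p}(\ell_1,\ell_2)$ displayed just before the lemma as the starting point, and then rewrite it in two steps. That expansion reads
\begin{equation*}
\mathcal{A}_{p}\left( \ell _{1},\ell _{2}\right) =\frac{1}{\ell _{2}}\sum_{c=1}^{p}(-1)^{p-c}\binom{p}{c}\left( \frac{\ell _{1}}{\ell _{2}}\right) ^{p-c}\sum_{m=1}^{\ell _{2}-1}m^{p+1-c}\left[ \frac{\ell _{1}m}{\ell _{2}}\right] ^{c}.
\end{equation*}
It is obtained from (\ref{ah4a}) by expanding $\left(\frac{\ell_1 m}{\ell_2}-\left[\frac{\ell_1 m}{\ell_2}\right]\right)^{p}$ with the binomial theorem, multiplying by the weight $m$, and interchanging the two finite sums. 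I will accept it as stated in the paper, summation range $c=1,\dots,p$ included.

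The first step is purely arithmetical. For each $c$, I combine the outer factor $\frac{1}{\ell_2}$ with $\left(\frac{\ell_1}{\ell_2}\right)^{p-c}$ to get
\[
\frac{\ell_1^{p-c}}{\ell_2^{p-c+1}}.
\]
This is exactly the coefficient appearing in (\ref{ah7}), and the sign $(-1)^{p-c}$ and the binomial coefficient $\binom{p}{c}$ carry over unchanged.

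The second step identifies the inner sum with a value of the iterated Cauchy--Euler operator applied to the Carlitz polynomial. By definition,
\[
Q_{v,\ell}(x,y;\cdot,\cdot)=\left(x\frac{\partial}{\partial x}\right)^{v}\left(y\frac{\partial}{\partial y}\right)^{\ell}C(x,y;\cdot,\cdot).
\]
Applying the operators termwise to the monomials $x^{k}y^{[\cdot]}$ of (\ref{ah6}) gives $k^{v}[\cdot]^{\ell}x^{k}y^{[\cdot]}$; this is the formula already displayed in the paper. Setting $x=y=1$ kills the monomials, so $Q_{v,\ell}(1,1;\cdot,\cdot)$ is the plain power sum of $k^{v}[\cdot]^{\ell}$. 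Taking $v=p+1-c$ and $\ell=c$ produces the inner sum $\sum m^{p+1-c}\left[\frac{\ell_1 m}{\ell_2}\right]^{c}$. Substituting this back gives (\ref{ah7}) term by term.

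The only delicate point, which I expect to be the main obstacle, is matching the summation variable and range of the inner sum (over $m$ from $1$ to $\ell_2-1$, with floor of $\ell_1 m/\ell_2$) to the indexing convention in $Q_{p+1-c,c}(1,1;\ell_1,\ell_2)$. The paper's definition (\ref{cp}) lets the first modulus bound the range. I would therefore make the convention explicit: the label $(\ell_1,\ell_2)$ in $Q$ here denotes the sum over $1\le m\le \ell_2-1$ of $m^{v}\left[\frac{\ell_1 m}{\ell_2}\right]^{\ell}$, which is the same ordering the paper uses for $S_p(\ell_1,\ell_2)$ and $\mathcal{A}_p(\ell_1,\ell_2)$. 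With that reading, the termwise derivative computation supplies the identification. Nothing beyond bookkeeping is required, since all sums are finite and the interchanges are trivially justified.
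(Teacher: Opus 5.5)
Your route is the one the paper intends (binomial expansion, then reading off $Q$), but it breaks at exactly the step you chose to take on trust. Expanding (\ref{ah4a}) by the binomial theorem, $(a-b)^{p}=\sum_{c=0}^{p}(-1)^{c}\binom{p}{c}a^{p-c}b^{c}$ with $a=\ell_1 m/\ell_2$ and $b=\left[\ell_1 m/\ell_2\right]$, gives
\[
\mathcal{A}_{p}(\ell_1,\ell_2)=\sum_{c=0}^{p}(-1)^{c}\binom{p}{c}\frac{\ell_1^{p-c}}{\ell_2^{p-c+1}}\sum_{m=1}^{\ell_2-1}m^{p+1-c}\left[\frac{\ell_1 m}{\ell_2}\right]^{c}.
\]
This is not the expansion you quote, in two ways. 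The sign $(-1)^{c}$ differs from $(-1)^{p-c}$ by the global factor $(-1)^{p}$. And the $c=0$ term $\frac{\ell_1^{p}}{\ell_2^{p+1}}\sum_{m=1}^{\ell_2-1}m^{p+1}$ has been dropped. Consequently the identity you set out to prove is false.

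For $p=0$ its right-hand side is an empty sum, whereas $\mathcal{A}_{0}(\ell_1,\ell_2)=\frac{\ell_2-1}{2}$. This fails for every $\ell_2\ge 2$, however $Q$ is read. For $p=1$ and $(\ell_1,\ell_2)=(2,3)$, the left-hand side is $\frac{4}{9}$, which is the paper's own value $\frac{2^{p}+2}{3^{p+1}}$. The right-hand side under your convention is $\frac{1}{3}\left(1\cdot 0+2\cdot 1\right)=\frac{2}{3}$. The missing term $\frac{2}{9}(1+4)=\frac{10}{9}$, together with the correct sign, restores $\frac{10}{9}-\frac{2}{3}=\frac{4}{9}$. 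The paper's own computation of $\mathcal{A}_1$ a few lines later contains precisely the $c=0$ term $\frac{\ell_1}{\ell_2^{2}}\sum m^{2}$ and the minus sign that the lemma omits. No bookkeeping can rescue the statement as written; what your two steps actually prove is the displayed identity above.

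The identification with $Q$ also needs an actual argument rather than a declared convention. The monomials in (\ref{ah6}) are $x^{k-1}y^{[k\ell_2/\ell_1]}$, not $x^{k}y^{[k\ell_2/\ell_1]}$. So applying $(x\frac{\partial}{\partial x})^{v}(y\frac{\partial}{\partial y})^{\ell}$ to $C$ produces the weight $(k-1)^{v}$. The power sum $\sum k^{v}[\cdot]^{\ell}x^{k}y^{[\cdot]}$ displayed in the paper comes from applying the operators to $xC(x,y;\ell_1,\ell_2)$ instead. Moreover, by (\ref{cp}) the first modulus bounds the range, so your inner sum is $Q_{p+1-c,c}(1,1;\ell_2,\ell_1)$. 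Swapping the argument order by fiat is a change of the statement, and you should say so rather than fold it into the proof. With these repairs the correct lemma reads
\[
\mathcal{A}_{p}(\ell_1,\ell_2)=\sum_{c=0}^{p}(-1)^{c}\binom{p}{c}\frac{\ell_1^{p-c}}{\ell_2^{p-c+1}}Q_{p+1-c,c}(1,1;\ell_2,\ell_1),
\]
with $Q$ taken as the displayed power sum. The $c=0$ term is then $Q_{p+1,0}(1,1;\ell_2,\ell_1)=\sum_{m=1}^{\ell_2-1}m^{p+1}$, reading $[\cdot]^{0}=1$ since $(y\frac{\partial}{\partial y})^{0}$ is the identity. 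For this version your argument is complete.
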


The numerical values of the sums $\mathcal{A}_{p}\left( \ell _{1},\ell
_{2}\right) $\ can be also found by the Carlitz polynomials, the three and
two-terms polynomial relations. 
\begin{equation*}
\mathcal{A}_{p}\left( 2,3\right) =\frac{1}{3}\sum_{m=1}^{2}m\left( \frac{2m}{%
3}-\left[ \frac{2m}{3}\right] \right) ^{p}=\frac{2^{p}+2}{3^{p+1}}
\end{equation*}%
and%
\begin{equation*}
\mathcal{A}_{p}\left( 3,2\right) =\frac{1}{2}\sum_{m=1}^{1}m\left( \frac{3m}{%
2}-\left[ \frac{3m}{2}\right] \right) ^{p}=\frac{1}{2^{p}}.
\end{equation*}%
Putting $p=0,1,2$ in (\ref{ah4a}), we have%
\begin{equation*}
\mathcal{A}_{0}\left( \ell _{1},\ell _{2}\right) =\frac{\ell _{2}-1}{2}.
\end{equation*}%
\begin{equation*}
\mathcal{A}_{1}\left( \ell _{1},\ell _{2}\right) =\frac{\ell _{1}}{\ell
_{2}^{2}}\sum_{m=1}^{\ell _{2}-1}m^{2}-\frac{1}{\ell _{2}}\sum_{m=1}^{\ell
_{2}-1}m\left[ \frac{\ell _{1}m}{\ell _{2}}\right] .
\end{equation*}%
This equation gives%
\begin{equation*}
\sum_{m=1}^{\ell _{2}-1}m\left[ \frac{\ell _{1}m}{\ell _{2}}\right] =%
\mathcal{A}_{1}\left( \ell _{1},\ell _{2}\right) -\frac{\ell _{1}(\ell
_{2}-1)(2\ell _{2}-1)}{6}.
\end{equation*}%
Since%
\begin{equation*}
\left[ \frac{\ell _{1}m}{\ell _{2}}\right] =\frac{\ell _{1}m}{\ell _{2}}%
-\left( \left( \frac{m\ell _{1}}{\ell _{2}}\right) \right) -\frac{1}{2},
\end{equation*}%
we get%
\begin{eqnarray*}
\mathcal{A}_{1}\left( \ell _{1},\ell _{2}\right) &=&\frac{\ell _{1}}{\ell
_{2}^{2}}\sum_{m=1}^{\ell _{2}-1}m^{2}-\frac{1}{\ell _{2}}\sum_{m=1}^{\ell
_{2}-1}m\left[ \frac{\ell _{1}m}{\ell _{2}}\right] \\
&=&\frac{\ell _{1}(\ell _{2}-1)(2\ell _{2}-1)}{6\ell _{2}}-\frac{1}{\ell _{2}%
}\sum_{m=1}^{\ell _{2}-1}m\left( \frac{\ell _{1}m}{\ell _{2}}-\left( \left( 
\frac{m\ell _{1}}{\ell _{2}}\right) \right) -\frac{1}{2}\right) \\
&=&s\left( \ell _{1},\ell _{2}\right) +\frac{\ell _{2}-1}{4\ell _{2}}.
\end{eqnarray*}%
Thus, we get the following theorem:

\begin{theorem}
Let $\ell _{1}\in \mathbb{Z}$, $\ell _{2}\in \mathbb{N}$ with $\left( \ell
_{1},\ell _{2}\right) =1$. Then%
\begin{equation}
s\left( \ell _{1},\ell _{2}\right) =\mathcal{A}_{1}\left( \ell _{1},\ell
_{2}\right) +\frac{1-\ell _{2}}{4}.  \label{ah5}
\end{equation}
\end{theorem}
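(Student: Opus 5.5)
The plan is to compare the two defining sums term by term rather than follow the longer computation through $\left[ \frac{\ell_1 m}{\ell_2}\right]$ and $\left( \left( \cdot \right) \right)$ given just before the statement. Both $s(\ell_1,\ell_2)$ and $\mathcal{A}_1(\ell_1,\ell_2)$ are weighted sums over $m=1,\ldots,\ell_2-1$ with the same weight $\frac{m}{\ell_2}$. They differ only by the constant $\frac12$ inside the summand, so the whole proof reduces to summing that constant.

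First, I would put $p=1$ in (\ref{ah4a}). This gives
\begin{equation*}
\mathcal{A}_1(\ell_1,\ell_2)=\frac{1}{\ell_2}\sum_{m=1}^{\ell_2-1} m\left(\frac{\ell_1 m}{\ell_2}-\left[\frac{\ell_1 m}{\ell_2}\right]\right).
\end{equation*}
Second, I would take the definition (\ref{s01}) of the classical Dedekind sum, i.e.\ (\ref{1}) with $p=1$ and $B_1(x)=x-\frac12$. Subtracting the two expressions termwise gives
\begin{equation*}
s(\ell_1,\ell_2)-\mathcal{A}_1(\ell_1,\ell_2)=-\frac{1}{2\ell_2}\sum_{m=1}^{\ell_2-1} m=-\frac{1}{2\ell_2}\cdot\frac{\ell_2(\ell_2-1)}{2}=\frac{1-\ell_2}{4}.
\end{equation*}
This is exactly (\ref{ah5}); the sum $\sum m$ is the same one already evaluated in (\ref{s0}).

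The only point requiring care is the equivalence with the sawtooth form $\sum ((m/\ell_2))((m\ell_1/\ell_2))$, if one wants the result stated for that version of $s$. Since $(\ell_1,\ell_2)=1$ and $1\le m\le \ell_2-1$, the number $\frac{\ell_1 m}{\ell_2}$ is never an integer. Hence $\left( \left( \frac{\ell_1 m}{\ell_2}\right) \right)=\frac{\ell_1 m}{\ell_2}-\left[\frac{\ell_1 m}{\ell_2}\right]-\frac12$ holds for every summand, and likewise for $m/\ell_2$. So no exceptional terms arise, and (\ref{s01}) is legitimately the definition used.

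I expect no real obstacle beyond keeping track of the normalising factor $\frac{1}{\ell_2}$. The preceding computation in the text records the constant as $\frac{\ell_2-1}{4\ell_2}$, whereas the direct termwise comparison yields $\frac{\ell_2-1}{4}$. I would therefore rely on the termwise subtraction above, which confirms the constant $\frac{1-\ell_2}{4}$ appearing in (\ref{ah5}).
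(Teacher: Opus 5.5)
Your termwise subtraction is correct and is essentially the paper's argument in streamlined form: the paper writes $\mathcal{A}_1(\ell_1,\ell_2)=\frac{\ell_1}{\ell_2^2}\sum m^2-\frac{1}{\ell_2}\sum m\left[\frac{\ell_1 m}{\ell_2}\right]$ and substitutes $\left[\frac{\ell_1 m}{\ell_2}\right]=\frac{\ell_1 m}{\ell_2}-\left(\left(\frac{m\ell_1}{\ell_2}\right)\right)-\frac12$, so the $\sum m^2$ terms cancel and, besides $s(\ell_1,\ell_2)$, only the same constant $\frac{1}{2\ell_2}\sum_{m=1}^{\ell_2-1}m=\frac{\ell_2-1}{4}$ survives, which confirms that the paper's intermediate $\frac{\ell_2-1}{4\ell_2}$ is a slip. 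One minor point, not needed for the statement since $s$ is defined by (\ref{s01}): identifying (\ref{s01}) with $\sum_{m=1}^{\ell_2-1}\left(\left(\frac{m}{\ell_2}\right)\right)\left(\left(\frac{\ell_1 m}{\ell_2}\right)\right)$ also requires $\sum_{m=1}^{\ell_2-1}\left(\left(\frac{\ell_1 m}{\ell_2}\right)\right)=0$ (because $m\mapsto \ell_1 m \bmod \ell_2$ permutes the nonzero residues), not merely the absence of integer arguments.
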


Putting $p=2$ in (\ref{ah4a}), we get%
\begin{eqnarray*}
\mathcal{A}_{p}\left( \ell _{1},\ell _{2}\right) &=&\frac{1}{\ell _{2}}%
\sum_{m=1}^{\ell _{2}-1}m\left( \frac{\ell _{1}^{2}m}{\ell _{2}^{2}}-2\frac{%
\ell _{1}m}{\ell _{2}}\left[ \frac{\ell _{1}m}{\ell _{2}}\right] +\left[ 
\frac{\ell _{1}m}{\ell _{2}}\right] ^{2}\right) \\
&=&\frac{\ell _{1}^{2}}{\ell _{2}^{3}}\sum_{m=1}^{\ell _{2}-1}m^{2}-2\frac{%
\ell _{1}}{\ell _{2}^{2}}\sum_{m=1}^{\ell _{2}-1}m^{2}\left[ \frac{\ell _{1}m%
}{\ell _{2}}\right] +\frac{1}{\ell _{2}}\sum_{m=1}^{\ell _{2}-1}m\left[ 
\frac{\ell _{1}m}{\ell _{2}}\right] ^{2}.
\end{eqnarray*}%
Combining the above equation with definition of $Q_{v,\ell }(x,y;\ell
_{1},\ell _{2})$, we get%
\begin{equation*}
\mathcal{A}_{p}\left( \ell _{1},\ell _{2}\right) =\frac{\ell _{1}^{2}(\ell
_{2}-1)(2\ell _{2}-1)}{6\ell _{2}^{2}}-\frac{2\ell _{1}}{\ell _{2}^{2}}%
Q_{2,1}(1,1;\ell _{1},\ell _{2})+\frac{1}{\ell _{2}}Q_{1,2}(1,1;\ell
_{1},\ell _{2})
\end{equation*}

Naturally, there are many studies in the literature that discuss the
relationships between among $\sum_{m=1}^{\ell _{2}-1}\left[ \frac{\ell _{1}m%
}{\ell _{2}}\right] $, $\sum_{m=1}^{\ell _{2}-1}\left[ \frac{\ell _{1}m}{%
\ell _{2}}\right] ^{k}$, $\sum_{m=1}^{\ell _{2}-1}m\left[ \frac{\ell _{1}m}{%
\ell _{2}}\right] $, $\sum_{m=1}^{\ell _{2}-1}m^{j}\left[ \frac{\ell _{1}m}{%
\ell _{2}}\right] ^{k}$ etc., the Dedekind--Carlitz polynomials, the
Dedekind type sums involving Dedekind-Rademacher Sums, Apostol-Dedekind
Sums. Some of these are (\textit{cf}. \cite{Beck}, \cite{BeckArX}, \cite%
{BerndtDieter}, \cite{Beck}, \cite{Broven}, \cite{Cetin}, \cite{RW} \cite{ascm2004}, 
\cite{ascm2006}).

We now give reciprocity law of the sums $\mathcal{A}_{1}\left( \ell
_{1},\ell _{2}\right) $ as follows:%
\begin{eqnarray*}
\mathcal{A}_{1}\left( \ell _{1},\ell _{2}\right) +\mathcal{A}_{1}\left( \ell
_{2},\ell _{1}\right) &=&s\left( \ell _{1},\ell _{2}\right) +s\left( \ell
_{2},\ell _{1}\right) +\frac{\ell _{1}-1}{4}+\frac{\ell _{2}-1}{4} \\
&=&-\frac{1}{4}\left( 3-\ell _{1}-\ell _{2}\right) +\frac{1}{12}\left( \frac{%
\ell _{1}}{\ell _{2}}+\frac{\ell _{2}}{\ell _{1}}+\frac{1}{\ell _{1}\ell _{2}%
}\right) .
\end{eqnarray*}%
Thus, a reciprocity law of the sum $\mathcal{A}_{1}\left( \ell _{1},\ell
_{2}\right) $ is given by%
\begin{equation*}
\mathcal{A}_{1}\left( \ell _{1},\ell _{2}\right) +\mathcal{A}_{1}\left( \ell
_{2},\ell _{1}\right) =\frac{\ell _{1}^{2}+\ell _{2}^{2}+\ell _{1}\ell
_{2}-3\left( 3-\ell _{1}-\ell _{2}\right) }{12\ell _{2}\ell _{1}}.
\end{equation*}%
By using (\ref{1}), we get the following result:

\begin{eqnarray}
S_{p}(\ell _{1},\ell _{2}) &=&\sum_{j=0}^{p}\binom{p}{j}B_{p-j}\sum_{m=1}^{%
\ell _{2}-1}\frac{m}{\ell _{2}}\left( \frac{\ell _{1}m}{\ell _{2}}-\left[ 
\frac{\ell _{1}m}{\ell _{2}}\right] \right) ^{j}  \label{1001} \\
&=&\sum_{j=0}^{p}\binom{p}{j}B_{p-j}\mathcal{A}_{j}\left( \ell _{1},\ell
_{2}\right) .  \notag
\end{eqnarray}

We now give some alternating generating functions for the Apostol-Dedekind
sums, $S_{p}(\ell _{1},\ell _{2})$.

\begin{theorem}
Let $\ell _{1}\in \mathbb{Z}$ and $\ell _{2}\in \mathbb{N}$ with $(\ell
_{1},\ell _{2})=1$. Then%
\begin{equation}
H(w;\left( \ell _{1},\ell _{2}\right) )=\frac{w}{\ell _{2}}\sum_{m=1}^{\ell
_{2}-1}m\frac{e^{\left( \frac{\ell _{1}m}{\ell _{2}}-\left[ \frac{\ell _{1}m%
}{\ell _{2}}\right] \right) w}}{e^{w}-1}.  \label{A}
\end{equation}
\end{theorem}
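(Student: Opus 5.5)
The plan is to start from the definition (\ref{2}) of $H(w;(\ell_1,\ell_2))$ and substitute the defining sum (\ref{1}) for $S_p(\ell_1,\ell_2)$. Since the inner sum over $m$ is finite ($1\le m\le \ell_2-1$), it can be interchanged with the power series in $p$. This gives
\begin{equation*}
H(w;(\ell_1,\ell_2))=\frac{1}{\ell_2}\sum_{m=1}^{\ell_2-1}m\sum_{p=0}^{\infty}B_p\left(\frac{\ell_1 m}{\ell_2}-\left[\frac{\ell_1 m}{\ell_2}\right]\right)\frac{w^p}{p!}.
\end{equation*}
For each fixed $m$, the inner series is exactly the generating function (\ref{Bp}) evaluated at $\alpha=\frac{\ell_1 m}{\ell_2}-\left[\frac{\ell_1 m}{\ell_2}\right]$. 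So it equals $g(w,\alpha)=\frac{w}{e^w-1}e^{w\alpha}$, valid for $|w|<2\pi$. This reproduces Theorem \ref{Th-3a}, and pulling the common factor $\frac{w}{e^w-1}$ out of the finite sum yields (\ref{A}).

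Alternatively, one can argue through $\mathcal{A}_p$. By (\ref{1001}), $S_p=\sum_j\binom{p}{j}B_{p-j}\mathcal{A}_j$, which says that $H$ is the Cauchy product of the exponential generating functions $\sum_n B_n w^n/n!=\frac{w}{e^w-1}$ and $F(w,(\ell_1,\ell_2))$ from (\ref{Yp}). Hence
\begin{equation*}
H(w;(\ell_1,\ell_2))=\frac{w}{e^w-1}F(w,(\ell_1,\ell_2)),
\end{equation*}
and inserting the closed form of $F$ from (\ref{Yp}) gives (\ref{A}) directly.

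There is no real obstacle here. The only points needing care are the interchange of the finite sum with the series, which is trivial, and the convergence domain $|w|<2\pi$, where the Bernoulli generating function is analytic. Alternatively, the identity can be read purely as one between formal power series.
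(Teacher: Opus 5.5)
Your first argument is correct and is exactly the paper's proof: substitute (\ref{1}) into (\ref{2}), interchange the finite $m$-sum with the $p$-series, replace $\overline{B}_p$ by $B_p$ evaluated at $\frac{\ell_1 m}{\ell_2}-\left[\frac{\ell_1 m}{\ell_2}\right]$, and recognize the generating function (\ref{Bp}). Your second route through (\ref{1001}) and the Cauchy product is also valid, since (\ref{1001}) follows from (\ref{1}) and the addition formula for $B_p$ without using (\ref{A}); it is the factorization (\ref{Aa}) that the paper records after the theorem.
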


Combining (\ref{A}) with (\ref{Bp}), we also have the following alternative
form of the function $H(w;\left( \ell _{1},\ell _{2}\right) )$:%
\begin{equation}
H(w;\left( \ell _{1},\ell _{2}\right) )=g\left( w,0\right) F(w,\left( \ell
_{1},\ell _{2}\right) ),  \label{Aa}
\end{equation}%
and%
\begin{equation}
H(w;\left( \ell _{1},\ell _{2}\right) )=\frac{1}{\ell _{2}}\sum_{m=1}^{\ell
_{2}-1}mg\left( \frac{w}{2},\frac{\ell _{1}m}{\ell _{2}}-\left[ \frac{\ell
_{1}m}{\ell _{2}}\right] \right) h\left( \frac{w}{2},\frac{\ell _{1}m}{\ell
_{2}}-\left[ \frac{\ell _{1}m}{\ell _{2}}\right] \right) ,  \label{eg}
\end{equation}%
\begin{equation}
H(w;\left( \ell _{1},\ell _{2}\right) )=g\left( \frac{w}{2},0\right) h\left( 
\frac{w}{2},0\right) F(w,\left( \ell _{1},\ell _{2}\right) ),  \label{eq0}
\end{equation}%
\begin{equation}
H(w;\left( \ell _{1},\ell _{2}\right) )=\frac{1}{\ell _{2}}h\left( \frac{w}{2%
},0\right) \sum_{m=1}^{\ell _{2}-1}mg\left( \frac{w}{2},\frac{2\ell _{1}m}{%
\ell _{2}}-2\left[ \frac{\ell _{1}m}{\ell _{2}}\right] \right) ,  \label{eg1}
\end{equation}%
\begin{equation}
H(w;\left( \ell _{1},\ell _{2}\right) )=h\left( \frac{w}{2},0\right) H\left( 
\frac{w}{2};\left( \ell _{1},\ell _{2}\right) \right) ,  \label{eq2}
\end{equation}%
and%
\begin{equation}
H(w;\left( \ell _{1},\ell _{2}\right) )=\frac{1}{\ell _{2}}g\left( \frac{w}{2%
},0\right) \sum_{m=1}^{\ell _{2}-1}mh\left( \frac{w}{2},\frac{2\ell _{1}m}{%
\ell _{2}}-2\left[ \frac{\ell _{1}m}{\ell _{2}}\right] \right) .  \label{eq3}
\end{equation}

\begin{proof}[Proof of equation (\protect\ref{A})]
By combining (\ref{2}) with (\ref{1}), we get%
\begin{eqnarray*}
H(w;\left( \ell _{1},\ell _{2}\right) ) &=&\sum_{p=0}^{\infty }S_{p}(\ell
_{1},\ell _{2})\frac{w^{p}}{p!} \\
&=&\frac{1}{\ell _{2}}\sum_{p=0}^{\infty }\sum_{m=1}^{\ell _{2}-1}m\overline{%
B}_{p}\left( \frac{\ell _{1}m}{\ell _{2}}\right) \frac{t^{p}}{p!}.
\end{eqnarray*}%
Since%
\begin{equation*}
\overline{B}_{p}\left( \frac{\ell _{1}m}{\ell _{2}}\right) =B_{p}\left( 
\frac{\ell _{1}m}{\ell _{2}}-\left[ \frac{\ell _{1}m}{\ell _{2}}\right]
\right) ,
\end{equation*}%
we have%
\begin{equation*}
H(w;\left( \ell _{1},\ell _{2}\right) )=\frac{1}{\ell _{2}}%
\sum_{p=0}^{\infty }\sum_{m=1}^{\ell _{2}-1}mB_{p}\left( \frac{\ell _{1}m}{%
\ell _{2}}-\left[ \frac{\ell _{1}m}{\ell _{2}}\right] \right) \frac{t^{p}}{p!%
},
\end{equation*}%
which yields%
\begin{equation*}
H(w;\left( \ell _{1},\ell _{2}\right) )=\frac{1}{\ell _{2}}\sum_{m=1}^{\ell
_{2}-1}m\sum_{p=0}^{\infty }B_{p}\left( \frac{\ell _{1}m}{\ell _{2}}-\left[ 
\frac{\ell _{1}m}{\ell _{2}}\right] \right) \frac{t^{p}}{p!}.
\end{equation*}%
Combining the above equation with (\ref{Bp}), we get the desired result.
\end{proof}

\begin{proof}[Proof Theorem \protect\ref{Th-3a}]
Combining the above equation with (\ref{Bp}) and (\ref{A}), we get the
desired result.
\end{proof}

\begin{theorem}
Let $\ell _{1},\ell _{2}\in \mathbb{N}_{0}$. Let $(\ell _{1},\ell _{2})=1$.
Then%
\begin{equation*}
H(w;\left( \ell _{2},\ell _{1}\right) )=\frac{g(w,0)}{\ell _{1}}\left( e^{%
\frac{2\ell _{2}}{\ell _{1}}w}P\left( e^{\frac{\ell _{2}}{\ell _{1}}%
w},e^{-w};\ell _{1},\ell _{2}\right) +e^{\frac{\ell _{2}}{\ell _{1}}%
w}C\left( e^{\frac{\ell _{2}}{\ell _{1}}w},e^{-w};\ell _{1},\ell _{2}\right)
\right) .
\end{equation*}
\end{theorem}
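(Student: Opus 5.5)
The plan is to combine the factorization (\ref{Aa}) with the Carlitz-polynomial identity (\ref{cp1}), after swapping the roles of $\ell_1$ and $\ell_2$. Applying (\ref{Aa}) to the pair $(\ell_2,\ell_1)$ gives
\begin{equation*}
H(w;(\ell_2,\ell_1)) = g(w,0)\,F(w,(\ell_2,\ell_1)),
\end{equation*}
where, by (\ref{Yp}) with the roles interchanged,
\begin{equation*}
F(w,(\ell_2,\ell_1)) = \frac{1}{\ell_1}\sum_{k=1}^{\ell_1-1} k\, e^{\left(\frac{k\ell_2}{\ell_1}-\left[\frac{k\ell_2}{\ell_1}\right]\right)w}.
\end{equation*}
The right-hand sum is exactly the left-hand side of (\ref{cp1}). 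So the first step is to rewrite $F(w,(\ell_2,\ell_1))$ as $\frac{1}{\ell_1}$ times that sum, and then to substitute the right-hand side of (\ref{cp1}). Multiplying by $g(w,0)$ gives the stated formula.

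To make the use of (\ref{cp1}) transparent, I will re-derive it from (\ref{cp11}) with $x=e^{\frac{\ell_2}{\ell_1}w}$ and $y=e^{-w}$. Then $x^k y^{[k\ell_2/\ell_1]} = e^{(k\ell_2/\ell_1-[k\ell_2/\ell_1])w}$, $x^2=e^{\frac{2\ell_2}{\ell_1}w}$, and $P$ and $C$ are evaluated at the same point. This substitution is legitimate because (\ref{cp11}) is a polynomial identity in the indeterminates $x,y$, and $P$ is the $x$-derivative taken before substitution. One should check that the earlier Lemma (\ref{1cp}) indeed gives $\sum k x^k y^{[\cdot]} = x^2 P + xC$: this follows from $x\,\partial_x(x^{k-1}) = (k-1)x^{k-1}$, so $x^2\partial_x C + xC = \sum k x^k y^{[\cdot]}$.

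The only real care point is hypotheses. The statement says $\ell_1,\ell_2\in\mathbb{N}_0$, but $\ell_1\ge 1$ is needed to divide by $\ell_1$. If $\ell_1=1$ the sum is empty and both sides vanish, which is consistent with $S_p(\ell_2,1)=0$. Coprimality ensures (\ref{1}) and (\ref{Yp}) are being applied in their stated setting. Beyond that there is no genuine obstacle: everything is identities between finite sums and formal power series in $w$, so interchanging the finite $k$-sum with the series expansion is automatic.
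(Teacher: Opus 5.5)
Your proposal is correct and matches the paper's proof: it applies (\ref{Aa}) to the swapped pair $(\ell_2,\ell_1)$, recognizes $\ell_1 F(w,(\ell_2,\ell_1))=\sum_{k=1}^{\ell_1-1}k\,e^{\left(\frac{k\ell_2}{\ell_1}-\left[\frac{k\ell_2}{\ell_1}\right]\right)w}$ as the left side of (\ref{cp1}), and substitutes, which is exactly what the paper's ``after some calculations'' amounts to. Your remark that $\ell_1\ge 1$ is needed is a fair correction: the stated hypothesis $\ell_1\in\mathbb{N}_0$ allows $\ell_1=0,\ \ell_2=1$, where the right-hand side divides by zero.
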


\begin{proof}
By combining (\ref{Aa}) with (\ref{cp1}), after some calculations, we arrive
at the desired result.
\end{proof}

\section{Novel formulas the Apostol-Dedekind sums}

By using generating functions and their derivatives equations, we derive
many novel formulas for the Apostol-Dedekind sums related to the Bernoulli
polynomials and numbers, the Euler polynomials and numbers, the Array
polynomials, the Stirling numbers.

\begin{theorem}
Let $\ell _{1},\ell _{2},p\in \mathbb{N}_{0}$. Let $(\ell _{1},\ell _{2})=1$%
. Then%
\begin{eqnarray*}
S_{p}(\ell _{1},\ell _{2}) &=&\frac{p}{1-p}\sum_{\nu
=0}^{p-1}\sum_{j=0}^{p-1-v}\binom{p-1-v}{j}\binom{p-1}{\nu }B_{j}S_{\nu
}(\ell _{1},\ell _{2}) \\
&&+\frac{p}{1-p}\frac{1}{\ell _{2}}\sum_{m=1}^{\ell _{2}-1}m\left( \frac{%
\ell _{1}m}{\ell _{2}}-\left[ \frac{\ell _{1}m}{\ell _{2}}\right] \right) 
\overline{B}_{p-1}\left( \frac{\ell _{1}m}{\ell _{2}}\right) .
\end{eqnarray*}
\end{theorem}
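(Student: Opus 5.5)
The plan is to differentiate the generating function of Theorem~\ref{Th-3a} with respect to $w$ and compare coefficients. The central step is a first-order differential identity for the Bernoulli generating function (\ref{Bp}). For fixed $x$, differentiating $g(w,x)=\frac{w e^{wx}}{e^{w}-1}$ gives
\begin{equation*}
w\frac{\partial }{\partial w}g(w,x)=g(w,x)+xw\,g(w,x)-g(w,x)\frac{we^{w}}{e^{w}-1}.
\end{equation*}
Writing $\frac{we^{w}}{e^{w}-1}=g(w,1)=\sum_{j}B_{j}(1)\frac{w^{j}}{j!}$, with $B_{j}(1)=\sum_{i}\binom{j}{i}B_{i}$, the last term becomes the product $g(w,1)g(w,x)$.

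Next I would apply this with $x=x_{m}:=\frac{\ell _{1}m}{\ell _{2}}-\left[ \frac{\ell _{1}m}{\ell _{2}}\right]$, multiply by $\frac{m}{\ell _{2}}$, and sum over $m=1,\ldots ,\ell _{2}-1$. By Theorem~\ref{Th-3a} this yields a differential equation for $H(w;(\ell _{1},\ell _{2}))$:
\begin{equation*}
wH'=H+\frac{w}{\ell _{2}}\sum_{m=1}^{\ell _{2}-1}m\,x_{m}\,g(w,x_{m})-g(w,1)H.
\end{equation*}

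I would then equate coefficients of $\frac{w^{p}}{p!}$. The left side gives $pS_{p}$. The term $H$ gives $S_{p}$. The middle term gives $\frac{p}{\ell _{2}}\sum_{m}m\,x_{m}\overline{B}_{p-1}\left(\frac{\ell _{1}m}{\ell _{2}}\right)$. The Cauchy product gives $\sum_{\nu }\binom{p}{\nu }B_{p-\nu }(1)S_{\nu }$. The $\nu =p$ term of that product, where $B_{0}(1)=1$, cancels against the $S_{p}$ from $H$.

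It remains to collect the surviving $S_{p}$ terms and divide by $p-1$ (so one assumes $p\neq 1$). Expanding $B_{p-1-\nu }(1)=\sum_{j}\binom{p-1-\nu }{j}B_{j}$ then produces the double sum in the statement, with the prefactor $\frac{p}{1-p}$.

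The main obstacle I expect is the index bookkeeping. The statement is written with $\binom{p-1}{\nu }$ and $B_{p-1-\nu }(1)$ rather than $\binom{p}{\nu }$ and $B_{p-\nu }(1)$, so the residual Cauchy-product terms must be reindexed to match exactly. To handle this I would first try using $\binom{p}{\nu }=\frac{p}{p-\nu }\binom{p-1}{\nu }$. Failing that, I would work instead with $\frac{\partial }{\partial w}$ applied to the form (\ref{Aa}), $H=g(w,0)F$, which shifts all indices by one. I would also check the formula numerically, for instance with $(\ell _{1},\ell _{2})=(2,3)$ at $p=2,3$ using the explicit $\mathcal{A}_{p}(2,3)$ and (\ref{1001}), to confirm the sign conventions before finalizing.
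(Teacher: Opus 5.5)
Your identity $wH'=H+\frac{w}{\ell_2}\sum_{m}m\,x_m\,g(w,x_m)-g(w,1)H$ and your coefficient comparison are correct. The gap is the step ``collect the surviving $S_p$ terms and divide by $p-1$'': your own computation never produces a factor $p-1$. Once the $\nu=p$ term $B_0(1)S_p$ cancels the $S_p$ coming from $H$, the only $S_p$ left is $pS_p$ on the left-hand side. What your argument actually proves is, for $p\geq 1$,
\[
S_p(\ell_1,\ell_2)=\frac{1}{\ell_2}\sum_{m=1}^{\ell_2-1}m\,x_m\,\overline{B}_{p-1}\left(\frac{\ell_1 m}{\ell_2}\right)-\frac{1}{p}\sum_{\nu=0}^{p-1}\binom{p}{\nu}B_{p-\nu}(1)\,S_\nu(\ell_1,\ell_2).
\]
No reindexing can turn this into the stated formula, because the stated formula is false. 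The rewriting $\binom{p}{\nu}=\frac{p}{p-\nu}\binom{p-1}{\nu}$ leaves $B_{p-\nu}(1)$ untouched, and restarting from $H=g(w,0)F$ leads to the same kind of identity. Besides being undefined at $p=1$, the statement fails in the very check you proposed. For $(\ell_1,\ell_2)=(2,3)$ one has $S_0=1$ and $S_1=s(2,3)=-\frac{1}{18}$. Directly, $S_2=\frac13B_2\left(\frac23\right)+\frac23B_2\left(\frac13\right)=-\frac{1}{18}$. Also $\sum_{m}m\,x_m\,\overline{B}_1\left(\frac{2m}{3}\right)=\frac19-\frac19=0$. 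So the stated right-hand side at $p=2$ equals $-2\left(\frac12S_0+S_1\right)=-\frac89$. The displayed identity gives $-\frac12\left(\frac16S_0+S_1\right)=-\frac{1}{18}$, as it should.

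The paper's proof follows the same route as yours, and it reaches the factor $p-1$ only through an error. After differentiating (\ref{3a}), it writes $g(w,1)$ where $\frac{e^w}{e^w-1}=\frac{1}{w}g(w,1)$ is meant; there are also sign slips in the $x_m\,g(w,x_m)$ term. Carrying $g(w,1)H$ instead of $\frac1w g(w,1)H$ through the multiplication by $w$ is exactly what leaves the $-S_p$ uncancelled. It also produces the shifted coefficients $p\binom{p-1}{\nu}B_{p-1-\nu}(1)$. These are precisely the two features you could not reproduce. So the obstruction is not index bookkeeping: the correct endpoint of your method is the displayed identity above, and that is what you should state and prove instead.
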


\begin{proof}
Taking derivative of Eq. (\ref{3a}) with respect to $w$, we get%
\begin{equation*}
\frac{d}{dw}\left\{ H(w;\left( \ell _{1},\ell _{2}\right) )\right\} =\frac{1%
}{\ell _{2}}\frac{d}{dw}\left\{ \sum_{m=1}^{\ell _{2}-1}mg\left( w,\frac{%
\ell _{1}m}{\ell _{2}}-\left[ \frac{\ell _{1}m}{\ell _{2}}\right] \right)
\right\} .
\end{equation*}%
Thus, we obtain%
\begin{eqnarray*}
\frac{d}{dw}\left\{ H(w;\left( \ell _{1},\ell _{2}\right) )\right\} &=&\frac{%
1}{w}\frac{1}{\ell _{2}}\sum_{m=1}^{\ell _{2}-1}mg\left( w,\frac{\ell _{1}m}{%
\ell _{2}}-\left[ \frac{\ell _{1}m}{\ell _{2}}\right] \right) \\
&&-\frac{1}{\ell _{2}}\sum_{m=1}^{\ell _{2}-1}m\left( \frac{\ell _{1}m}{\ell
_{2}}-\left[ \frac{\ell _{1}m}{\ell _{2}}\right] \right) g\left( w,\frac{%
\ell _{1}m}{\ell _{2}}-\left[ \frac{\ell _{1}m}{\ell _{2}}\right] \right) \\
&&-\frac{e^{w}}{e^{w}-1}\frac{1}{\ell _{2}}\sum_{m=1}^{\ell _{2}-1}mg\left(
w,\frac{\ell _{1}m}{\ell _{2}}-\left[ \frac{\ell _{1}m}{\ell _{2}}\right]
\right) .
\end{eqnarray*}%
Therefore%
\begin{eqnarray*}
&&\frac{d}{dw}\left\{ H(w;\left( \ell _{1},\ell _{2}\right) )\right\}
-\left( \frac{1}{w}-g\left( w,1\right) \right) H(w;\left( \ell _{1},\ell
_{2}\right) ) \\
&=&\frac{1}{\ell _{2}}\sum_{m=1}^{\ell _{2}-1}m\left( \frac{\ell _{1}m}{\ell
_{2}}-\left[ \frac{\ell _{1}m}{\ell _{2}}\right] \right) g\left( w,\frac{%
\ell _{1}m}{\ell _{2}}-\left[ \frac{\ell _{1}m}{\ell _{2}}\right] \right) .
\end{eqnarray*}%
By using the above equation, we get%
\begin{eqnarray*}
&&\sum_{p=1}^{\infty }S_{p}(\ell _{1},\ell _{2})\frac{w^{p-1}}{(p-1)!}-\frac{%
1}{w}\sum_{p=0}^{\infty }S_{p}(\ell _{1},\ell _{2})\frac{w^{p}}{p!} \\
&=&-\sum_{p=0}^{\infty }\sum_{\nu =0}^{p}\binom{p}{\nu }B_{p-\nu }\left(
1\right) S_{\nu }(\ell _{1},\ell _{2})\frac{t^{p}}{p!} \\
&&-\frac{1}{\ell _{2}}\sum_{m=1}^{\ell _{2}-1}m\left[ \frac{\ell _{1}m}{\ell
_{2}}-\frac{\ell _{1}m}{\ell _{2}}\right] \sum_{p=0}^{\infty }\overline{B}%
_{p}\left( \frac{\ell _{1}m}{\ell _{2}}\right) \frac{w^{p}}{p!}.
\end{eqnarray*}%
Thus%
\begin{eqnarray*}
&&\sum_{p=0}^{\infty }S_{p+1}(\ell _{1},\ell _{2})\frac{w^{p+1}}{p!}%
-\sum_{p=0}^{\infty }S_{p}(\ell _{1},\ell _{2})\frac{w^{p}}{p!} \\
&=&\sum_{p=0}^{\infty }\sum_{\nu =0}^{p}\binom{p}{\nu }S_{\nu }(\ell
_{1},\ell _{2})B_{p-\nu }\left( 1\right) \frac{t^{p+1}}{p!} \\
&&-\frac{1}{\ell _{2}}\sum_{m=1}^{\ell _{2}-1}m\left( \frac{\ell _{1}m}{\ell
_{2}}-\left[ \frac{\ell _{1}m}{\ell _{2}}\right] \right) \sum_{p=0}^{\infty }%
\overline{B}_{p}\left( \frac{\ell _{1}m}{\ell _{2}}\right) \frac{w^{p+1}}{p!}%
,
\end{eqnarray*}%
which yields%
\begin{eqnarray*}
&&\sum_{p=0}^{\infty }pS_{p}(\ell _{1},\ell _{2})\frac{w^{p}}{p!}%
-\sum_{p=0}^{\infty }S_{p}(\ell _{1},\ell _{2})\frac{w^{p}}{p!} \\
&=&-\sum_{p=0}^{\infty }p\sum_{\nu =0}^{p-1}\binom{p-1}{\nu }S_{\nu }(\ell
_{1},\ell _{2})B_{p-1-\nu }\left( 1\right) \frac{t^{p}}{p!} \\
&&-\frac{1}{\ell _{2}}\sum_{m=1}^{\ell _{2}-1}m\left( \frac{\ell _{1}m}{\ell
_{2}}-\left[ \frac{\ell _{1}m}{\ell _{2}}\right] \right) \sum_{p=0}^{\infty
}p\overline{B}_{p-1}\left( \frac{\ell _{1}m}{\ell _{2}}\right) \frac{w^{p}}{%
p!}.
\end{eqnarray*}%
Comparing coefficient $\dfrac{w^{p}}{p!}$ on the both sides of the above
equation, we get%
\begin{eqnarray*}
\left( p-1\right) S_{p}(\ell _{1},\ell _{2}) &=&-p\sum_{\nu =0}^{p-1}\binom{%
p-1}{\nu }B_{p-1-\nu }\left( 1\right) S_{\nu }(\ell _{1},\ell _{2}) \\
&&-\frac{p}{\ell _{2}}\sum_{m=1}^{\ell _{2}-1}m\left( \frac{\ell _{1}m}{\ell
_{2}}-\left[ \frac{\ell _{1}m}{\ell _{2}}\right] \right) \overline{B}%
_{p-1}\left( \frac{\ell _{1}m}{\ell _{2}}\right) .
\end{eqnarray*}%
By the aid of the following known identity:%
\begin{equation*}
B_{p-\nu -1}\left( 1\right) =\sum_{j=0}^{p-v-1}\binom{p-v-1}{j}B_{j},
\end{equation*}%
we get the desired result.
\end{proof}

\begin{theorem}
Let $\ell _{1},\ell _{2},p\in \mathbb{N}_{0}$. Let $(\ell _{1},\ell _{2})=1$%
. Then%
\begin{equation*}
\sum_{j=0}^{p-1}\binom{p}{j}S_{j}(\ell _{1},\ell _{2})=\frac{p}{\ell _{2}}%
\sum_{m=1}^{\ell _{2}-1}m\left( \frac{\ell _{1}m}{\ell _{2}}-\left[ \frac{%
\ell _{1}m}{\ell _{2}}\right] \right) ^{p-1}.
\end{equation*}
\end{theorem}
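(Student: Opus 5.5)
The plan is to read the identity off a functional equation for the generating function $H(w;(\ell_1,\ell_2))$. By (\ref{A}), or equivalently (\ref{Aa}) with $g(w,0)=w/(e^w-1)$, we have
\begin{equation*}
(e^{w}-1)\,H(w;(\ell_1,\ell_2)) = w\,F(w,(\ell_1,\ell_2)),
\end{equation*}
where $F$ is the generating function (\ref{Yp}) of the sums $\mathcal{A}_p(\ell_1,\ell_2)$. Here $\mathcal{A}_p(\ell_1,\ell_2)$ is given explicitly by (\ref{ah4a}) as $\frac{1}{\ell_2}\sum_{m=1}^{\ell_2-1} m\bigl(\frac{\ell_1 m}{\ell_2}-\bigl[\frac{\ell_1 m}{\ell_2}\bigr]\bigr)^{p}$. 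The right-hand side of the theorem is therefore exactly $p\,\mathcal{A}_{p-1}(\ell_1,\ell_2)$.

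Next I expand both sides as power series in $w$. On the left, $e^{w}H(w)$ is a Cauchy product of $\sum_n w^n/n!$ with $\sum_j S_j w^j/j!$. Its coefficient of $w^p/p!$ is $\sum_{j=0}^{p}\binom{p}{j}S_j(\ell_1,\ell_2)$. Subtracting $H(w)$ removes the $j=p$ term, so the left side has coefficient $\sum_{j=0}^{p-1}\binom{p}{j}S_j(\ell_1,\ell_2)$.

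On the right, $wF(w)=\sum_{p\ge 0}\mathcal{A}_p w^{p+1}/p!$. After shifting the index, its coefficient of $w^p/p!$ is $p\,\mathcal{A}_{p-1}(\ell_1,\ell_2)$ for $p\ge1$, and $0$ for $p=0$. Comparing coefficients gives the statement. For $p=0$ both sides are empty or zero, so we may assume $p\ge 1$.

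There is no real obstacle. The only points needing care are the index shift and the convention at $p=0$, where $(\cdot)^{p-1}$ is multiplied by $p=0$. As a cross-check, the identity also follows termwise from the Bernoulli difference equation $B_p(x+1)-B_p(x)=p x^{p-1}$ together with $B_p(x+1)=\sum_j\binom{p}{j}B_j(x)$. Apply these with $x=\frac{\ell_1 m}{\ell_2}-\bigl[\frac{\ell_1 m}{\ell_2}\bigr]$, multiply by $m/\ell_2$, and sum over $m$.
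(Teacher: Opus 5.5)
Your proposal is correct and follows the paper's proof essentially verbatim: multiply (\ref{A}) by $e^{w}-1$, expand $e^{w}H(w;(\ell_1,\ell_2))$ as a Cauchy product, and compare coefficients of $w^{p}/p!$ with the index-shifted expansion of the right-hand side. Your identification of the right-hand side as $p\,\mathcal{A}_{p-1}(\ell_1,\ell_2)$ and your termwise cross-check via $B_p(x+1)-B_p(x)=p\,x^{p-1}$ are valid additions.
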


\begin{proof}
By using (\ref{A}), we get%
\begin{equation*}
H(w;\left( \ell _{1},\ell _{2}\right) )\left( e^{w}-1\right) =\frac{w}{\ell
_{2}}\sum_{m=1}^{\ell _{2}-1}me^{\left( \frac{\ell _{1}m}{\ell _{2}}-\left[ 
\frac{\ell _{1}m}{\ell _{2}}\right] \right) w}.
\end{equation*}%
By combining (\ref{2}) with the above equation yields%
\begin{eqnarray*}
&&\sum_{p=0}^{\infty }\left( \sum_{j=0}^{p}\binom{p}{j}S_{j}(\ell _{1},\ell
_{2})-S_{p}(\ell _{1},\ell _{2})\right) \frac{w^{p}}{p!} \\
&=&\frac{1}{\ell _{2}}\sum_{m=1}^{\ell _{2}-1}m\sum_{p=0}^{\infty }p\left( 
\frac{\ell _{1}m}{\ell _{2}}-\left[ \frac{\ell _{1}m}{\ell _{2}}\right]
\right) ^{p-1}\frac{w^{p}}{p!}.
\end{eqnarray*}%
Comparing coefficient $\dfrac{w^{p}}{p!}$ on the both sides of the above
equation, we get%
\begin{equation*}
\sum_{j=0}^{p}\binom{p}{j}S_{j}(\ell _{1},\ell _{2})-S_{p}(\ell _{1},\ell
_{2})=\frac{p}{\ell _{2}}\sum_{m=1}^{\ell _{2}-1}m\left( \frac{\ell _{1}m}{%
\ell _{2}}-\left[ \frac{\ell _{1}m}{\ell _{2}}\right] \right) ^{p-1},
\end{equation*}%
which gives the desired result.
\end{proof}

\begin{theorem}
Let $\ell _{1},\ell _{2},p\in \mathbb{N}_{0}$. Let $(\ell _{1},\ell _{2})=1$%
. Then%
\begin{equation}
S_{p}(\ell _{1},\ell _{2})=\frac{1}{\ell _{2}}\sum_{m=1}^{\ell
_{2}-1}\sum_{j=0}^{p}\binom{p}{j}m\left( \frac{\ell _{1}m}{\ell _{2}}-\left[ 
\frac{\ell _{1}m}{\ell _{2}}\right] \right) ^{j}B_{p-j}.  \label{o1}
\end{equation}
\end{theorem}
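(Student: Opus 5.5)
The plan is to read (\ref{o1}) off the product form (\ref{Aa}) of the generating function, $H(w;(\ell_1,\ell_2))=g(w,0)F(w,(\ell_1,\ell_2))$, by comparing coefficients. Nothing beyond this product and the two series expansions already in the paper should be needed.

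First I expand each factor. By (\ref{Bp}) with $\alpha=0$ we have $g(w,0)=\sum_{n\ge0}B_n\frac{w^n}{n!}$. By (\ref{Yp}), $F(w,(\ell_1,\ell_2))=\sum_{j\ge0}\mathcal{A}_j(\ell_1,\ell_2)\frac{w^j}{j!}$. Here (\ref{ah4a}) gives
\begin{equation*}
\mathcal{A}_j(\ell_1,\ell_2)=\frac{1}{\ell_2}\sum_{m=1}^{\ell_2-1}m\left(\frac{\ell_1 m}{\ell_2}-\left[\frac{\ell_1 m}{\ell_2}\right]\right)^j .
\end{equation*}
This expansion follows from $e^{xw}=\sum_j x^j w^j/j!$ applied termwise to the finite sum over $m$.

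Next I multiply the two series with the Cauchy product for exponential generating functions. This gives
\begin{equation*}
H(w;(\ell_1,\ell_2))=\sum_{p\ge0}\left(\sum_{j=0}^p\binom{p}{j}B_{p-j}\mathcal{A}_j(\ell_1,\ell_2)\right)\frac{w^p}{p!}.
\end{equation*}
Comparing coefficients of $w^p/p!$ with definition (\ref{2}) yields $S_p(\ell_1,\ell_2)=\sum_{j}\binom{p}{j}B_{p-j}\mathcal{A}_j(\ell_1,\ell_2)$, which is (\ref{1001}). Substituting the explicit formula for $\mathcal{A}_j$ and interchanging the two finite sums then gives exactly (\ref{o1}).

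As a cross-check, the same identity also follows directly from (\ref{1}). One applies the addition formula $B_p(x)=\sum_j\binom{p}{j}B_{p-j}x^j$, which is itself read off from $g(w,x)=g(w,0)e^{xw}$, with $x=\frac{\ell_1 m}{\ell_2}-\left[\frac{\ell_1 m}{\ell_2}\right]$, and then sums over $m$.

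No step is a real obstacle. The only point needing a remark is the justification of termwise manipulation: all series converge for $|w|<2\pi$, and the sum over $m$ is finite, so interchanging the summations is harmless.
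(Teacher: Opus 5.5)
Your proposal is correct and matches the paper's proof. The paper likewise writes $H(w;(\ell_1,\ell_2))=g(w,0)F(w,(\ell_1,\ell_2))$ via (\ref{Aa}), expands both factors using (\ref{Bp}) and the exponential series, takes the Cauchy product, and compares coefficients of $w^p/p!$; your cross-check via the addition formula $B_p(x)=\sum_j\binom{p}{j}B_{p-j}x^j$ applied directly to (\ref{1}) is also valid.
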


\begin{proof}
By using (\ref{Aa}) and (\ref{Bp}), we get%
\begin{equation*}
\sum_{p=0}^{\infty }S_{p}(\ell _{1},\ell _{2})\frac{w^{p}}{p!}=\frac{1}{\ell
_{2}}\sum_{p=0}^{\infty }B_{p}\frac{t^{p}}{p!}\sum_{m=1}^{\ell
_{2}-1}m\sum_{p=0}^{\infty }\left( \frac{\ell _{1}m}{\ell _{2}}-\left[ \frac{%
\ell _{1}m}{\ell _{2}}\right] \right) ^{p}\frac{w^{p}}{p!}.
\end{equation*}%
By using Cauchy product on the RHS of the above function yields%
\begin{equation*}
\sum_{p=0}^{\infty }S_{p}(\ell _{1},\ell _{2})\frac{w^{p}}{p!}=\frac{1}{\ell
_{2}}\sum_{m=1}^{\ell _{2}-1}m\sum_{p=0}^{\infty }\sum_{j=0}^{p}\binom{p}{j}%
\left( \frac{\ell _{1}m}{\ell _{2}}-\left[ \frac{\ell _{1}m}{\ell _{2}}%
\right] \right) ^{j}B_{p-j}\frac{w^{p}}{p!}.
\end{equation*}%
Comparing coefficient $\dfrac{w^{p}}{p!}$ on the both sides of the above
equation, we get the desired result.
\end{proof}

\begin{theorem}
Let $\ell _{1},\ell _{2},p\in \mathbb{N}_{0}$. Let $(\ell _{1},\ell _{2})=1$%
. Then%
\begin{equation*}
S_{p}(\ell _{1},\ell _{2})=-\frac{p}{\ell _{2}}\sum_{m=1}^{\ell
_{2}-1}m\zeta \left( 1-p,\frac{\ell _{1}m}{\ell _{2}}-\left[ \frac{\ell _{1}m%
}{\ell _{2}}\right] \right) .
\end{equation*}
\end{theorem}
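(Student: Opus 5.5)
The plan is to reduce the statement to the classical evaluation of the Hurwitz zeta function at non-positive integers,
\begin{equation*}
\zeta (1-p,a)=-\frac{B_{p}(a)}{p},\qquad p\in \mathbb{N},\ 0<a\leq 1,
\end{equation*}
and then substitute this into the defining formula (\ref{1}) term by term. Concretely, for each $m$ with $1\leq m\leq \ell _{2}-1$ I will put $a_{m}=\frac{\ell _{1}m}{\ell _{2}}-\left[ \frac{\ell _{1}m}{\ell _{2}}\right] $. Since $(\ell _{1},\ell _{2})=1$ and $\ell _{2}\nmid m$, the number $\ell _{1}m/\ell _{2}$ is not an integer. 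Hence $0<a_{m}<1$, so every $a_{m}$ lies in the range where the Hurwitz identity applies and no boundary issue at $a=0$ arises. (If $\ell _{2}=1$ both sides are empty sums.) Then $-p\,\zeta (1-p,a_{m})=B_{p}(a_{m})=\overline{B}_{p}\left( \frac{\ell _{1}m}{\ell _{2}}\right) $. Multiplying by $m/\ell _{2}$ and summing over $m$ gives exactly $S_{p}(\ell _{1},\ell _{2})$ by (\ref{1}).

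To keep the argument tied to the generating functions of this paper, I would justify the Hurwitz identity via $g$. For $\operatorname{Re}s>1$ and $0<a\leq 1$,
\begin{equation*}
\zeta (s,a)=\frac{1}{\Gamma (s)}\int_{0}^{\infty }t^{s-1}\frac{e^{-at}}{1-e^{-t}}\,dt .
\end{equation*}
From (\ref{Bp}) one has $\frac{e^{-at}}{1-e^{-t}}=\frac{1}{t}g(-t,a)$. Replacing the real integral by a Hankel contour gives the meromorphic continuation. At $s=1-p$ the factor $1/\Gamma (s)$ and the residue computation pick out the coefficient of $t^{p}$ in $g(-t,a)$, namely $(-1)^{p}B_{p}(a)/p!$. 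Keeping track of the factor $\Gamma (1-s)$ and the sign then yields $\zeta (1-p,a)=-B_{p}(a)/p$. Alternatively I may simply cite this as a standard fact, since the paper already freely uses classical Bernoulli identities.

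The main obstacle is the case $p=0$, where $\zeta (1,a)$ is a pole and the right-hand side is formally $0\cdot \infty $. I would interpret $-p\,\zeta (1-p,a)$ at $p=0$ as the limit $\lim_{s\rightarrow 1}(s-1)\zeta (s,a)=1=B_{0}(a)$. With this reading the formula gives $\frac{1}{\ell _{2}}\sum_{m}m=\frac{\ell _{2}-1}{2}=S_{0}(\ell _{1},\ell _{2})$, consistent with (\ref{s0}). Otherwise the theorem should be stated for $p\geq 1$.
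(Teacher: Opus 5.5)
Your proof is correct, but it takes a different route from the paper.

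\textbf{The paper's route.} The paper never cites $\zeta(1-p,a)=-B_{p}(a)/p$. It starts from (\ref{A}) and expands $1/(e^{w}-1)=-\sum_{\nu\geq 0}e^{\nu w}$, which is valid only for $\mathrm{Re}\,w<0$. It then writes $we^{(a_{m}+\nu)w}=\sum_{p\geq 0}p\,(a_{m}+\nu)^{p-1}w^{p}/p!$, interchanges the $\nu$- and $p$-sums, and identifies $\sum_{\nu\geq 0}(a_{m}+\nu)^{p-1}$ with $\zeta(1-p,a_{m})$ before comparing coefficients. This is a formal, zeta-regularized computation. The $\nu$-series diverges for every $p$, and the Taylor coefficients at $w=0$ are read off from an expansion that does not converge at $w=0$. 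So the paper's argument is in effect a heuristic rederivation, inside the generating function, of the same Hurwitz identity you invoke.

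\textbf{Your route.} You check $0<a_{m}<1$ from coprimality and substitute the classical identity term by term into (\ref{1}). This is shorter and rigorous. Your optional Hankel-contour justification via $g(-t,a)/t$ is the standard way to make the paper's formal identification legitimate. The paper's route buys only thematic consistency with its generating-function method.

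\textbf{The case $p=0$.} You also handle $p=0$ better than the paper does. There the right-hand side is $0\cdot \zeta(1,a_{m})$. Read literally, the paper's term-by-term expansion assigns coefficient $0$ to $w^{0}$, which contradicts $S_{0}(\ell_{1},\ell_{2})=(\ell_{2}-1)/2$. Two fixes are correct: interpret $-p\,\zeta(1-p,a)$ as $\lim_{s\rightarrow 1}(s-1)\zeta(s,a)=1=B_{0}(a)$, as you do, or restrict the statement to $p\geq 1$.
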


\begin{proof}
By applying binomial theorem to (\ref{A}) for $|e^{t}|<1,$ we get%
\begin{equation*}
\sum_{p=0}^{\infty }S_{p}(\ell _{1},\ell _{2})\frac{w^{p}}{p!}=-\frac{w}{%
\ell _{2}}\sum_{m=1}^{\ell _{2}-1}m\sum_{\nu =0}^{\infty }e^{\left( \frac{%
\ell _{1}m}{\ell _{2}}-\left[ \frac{\ell _{1}m}{\ell _{2}}\right] +\nu
\right) w}.
\end{equation*}%
Therefore%
\begin{equation*}
\sum_{p=0}^{\infty }S_{p}(\ell _{1},\ell _{2})\frac{w^{p}}{p!}=-\frac{1}{%
\ell _{2}}\sum_{m=1}^{\ell _{2}-1}m\sum_{\nu =0}^{\infty }\sum_{p=0}^{\infty
}p\left( \frac{\ell _{1}m}{\ell _{2}}-\left[ \frac{\ell _{1}m}{\ell _{2}}%
\right] +\nu \right) ^{p-1}\frac{w^{p}}{p!}.
\end{equation*}%
Combining the above function with the Hurwitz zeta function yields%
\begin{equation*}
\sum_{p=0}^{\infty }S_{p}(\ell _{1},\ell _{2})\frac{w^{p}}{p!}=-\frac{1}{%
\ell _{2}}\sum_{m=1}^{\ell _{2}-1}m\sum_{p=0}^{\infty }p\zeta \left( 1-p,%
\frac{\ell _{1}m}{\ell _{2}}-\left[ \frac{\ell _{1}m}{\ell _{2}}\right]
\right) \frac{w^{p}}{p!}.
\end{equation*}%
Comparing coefficient $\dfrac{w^{p}}{p!}$ on the both sides of the above
equation, we get the desired result.
\end{proof}

\begin{theorem}
Let $\ell _{1},\ell _{2},p\in \mathbb{N}_{0}$. Let $(\ell _{1},\ell _{2})=1$%
. Then%
\begin{equation}
S_{p}(\ell _{1},\ell _{2})=\frac{1}{\ell _{2}}\sum_{m=1}^{\ell
_{2}-1}m\sum_{\nu =0}^{p}\frac{(-1)^{v}v!}{v+1}S_{v}^{p}\left( \frac{\ell
_{1}m}{\ell _{2}}-\left[ \frac{\ell _{1}m}{\ell _{2}}\right] \right) .
\label{s1}
\end{equation}
\end{theorem}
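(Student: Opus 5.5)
The natural route is to expand the Bernoulli generating factor $\frac{w}{e^w-1}$ in powers of $e^w-1$ and then recognize the array polynomials via (\ref{Ap-1}). Write $\alpha_m=\frac{\ell_1 m}{\ell_2}-\left[\frac{\ell_1 m}{\ell_2}\right]$. By Theorem \ref{Th-3a},
\[
H(w;(\ell_1,\ell_2))=\frac{1}{\ell_2}\sum_{m=1}^{\ell_2-1} m\,\frac{w}{e^w-1}e^{w\alpha_m}.
\]
The key identity is
\[
w=\ln\bigl(1+(e^w-1)\bigr)=\sum_{v=0}^{\infty}\frac{(-1)^v}{v+1}(e^w-1)^{v+1},
\]
valid as formal power series since $e^w-1$ has no constant term. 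This can be seen as the $k=1$ case of (\ref{Sn1}) with argument $e^w-1$. Dividing by $e^w-1$ gives
\[
\frac{w}{e^w-1}=\sum_{v=0}^{\infty}\frac{(-1)^v}{v+1}(e^w-1)^v .
\]

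Substituting this into the generating function, each summand becomes
\[
\frac{w}{e^w-1}e^{w\alpha_m}=\sum_{v=0}^{\infty}\frac{(-1)^v\,v!}{v+1}\,\frac{(e^w-1)^v e^{w\alpha_m}}{v!}.
\]
By (\ref{Ap-1}), read as $\frac{(e^w-1)^v e^{yw}}{v!}=\sum_{a\ge 0}S_v^a(y)\frac{w^a}{a!}$, this equals
\[
\sum_{v=0}^{\infty}\frac{(-1)^v v!}{v+1}\sum_{p=0}^{\infty}S_v^p(\alpha_m)\frac{w^p}{p!}.
\]

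Next, interchange the two sums. Because $(e^w-1)^v=O(w^v)$, we have $S_v^p(y)=0$ for $v>p$. Hence the coefficient of $w^p/p!$ involves only the finitely many indices $v=0,\dots,p$, and the rearrangement is legitimate as a formal power series identity. Equating coefficients of $w^p/p!$ with (\ref{2}) then yields (\ref{s1}).

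The only delicate point is this coefficient extraction. It requires justifying both the formal substitution of $e^w-1$ into the logarithm series and the vanishing $S_v^p=0$ for $p<v$. Both follow directly from the lowest-order term of $(e^w-1)^v$, so I expect no real obstacle beyond careful bookkeeping.
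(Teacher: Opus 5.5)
Your proposal is correct and follows the paper's proof essentially step for step. You expand $\frac{w}{e^w-1}=\sum_{v\ge 0}\frac{(-1)^v}{v+1}(e^w-1)^v$, recognize the array polynomials via (\ref{Ap-1}), and compare coefficients of $w^p/p!$ using $S_v^p=0$ for $v>p$; the only difference is that the paper justifies the expansion analytically by assuming $|e^w-1|<1$, whereas you argue with formal power series, which makes the interchange of sums cleaner to justify.
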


\begin{proof}
Assuming that $|e^{w}-1|<1$, then Eq. (\ref{A}) reduces to%
\begin{equation}
H(w;\left( \ell _{1},\ell _{2}\right) )=\frac{1}{\ell _{2}}\sum_{m=1}^{\ell
_{2}-1}m\sum_{\nu =0}^{\infty }\frac{(-1)^{v}\left( e^{w}-1\right)
^{v}e^{\left( \frac{\ell _{1}m}{\ell _{2}}-\left[ \frac{\ell _{1}m}{\ell _{2}%
}\right] \right) w}}{v+1}.  \label{hr}
\end{equation}%
Combining the above function with (\ref{Ap-1}), we get%
\begin{equation*}
H(w;\left( \ell _{1},\ell _{2}\right) )=\frac{1}{\ell _{2}}\sum_{m=1}^{\ell
_{2}-1}m\sum_{\nu =0}^{\infty }\frac{(-1)^{v}v!}{v+1}\sum_{p=0}^{\infty
}S_{v}^{p}\left( \frac{\ell _{1}m}{\ell _{2}}-\left[ \frac{\ell _{1}m}{\ell
_{2}}\right] \right) \frac{w^{p}}{p!}.
\end{equation*}%
Therefore%
\begin{equation*}
\sum_{p=0}^{\infty }S_{p}(\ell _{1},\ell _{2})\frac{w^{p}}{p!}=\frac{1}{\ell
_{2}}\sum_{m=1}^{\ell _{2}-1}m\sum_{\nu =0}^{\infty }\frac{(-1)^{v}v!}{v+1}%
\sum_{p=0}^{\infty }S_{v}^{p}\left( \frac{\ell _{1}m}{\ell _{2}}-\left[ 
\frac{\ell _{1}m}{\ell _{2}}\right] \right) \frac{w^{p}}{p!}.
\end{equation*}%
Comparing coefficient $\dfrac{w^{p}}{p!}$ on the both sides of the above
equation, we get%
\begin{equation*}
S_{p}(\ell _{1},\ell _{2})=\frac{1}{\ell _{2}}\sum_{m=1}^{\ell
_{2}-1}m\sum_{p=0}^{\infty }\frac{(-1)^{v}v!}{v+1}S_{v}^{p}\left( \frac{\ell
_{1}m}{\ell _{2}}-\left[ \frac{\ell _{1}m}{\ell _{2}}\right] \right) .
\end{equation*}%
When $v>p$, $S_{v}^{p}\left( \frac{\ell _{1}m}{\ell _{2}}-\left[ \frac{\ell
_{1}m}{\ell _{2}}\right] \right) =0$, the above equation reduces to the
desired result.
\end{proof}

\begin{corollary}
Let $\ell _{1},\ell _{2},p\in \mathbb{N}_{0}$. Let $(\ell _{1},\ell _{2})=1$%
. Then%
\begin{equation*}
S_{p}(\ell _{1},\ell _{2})=\frac{1}{\ell _{2}}\sum_{m=1}^{\ell
_{2}-1}m\sum_{\nu =0}^{p}\sum_{j=0}^{v}\left\langle
\begin{matrix}
v\\
j
\end{matrix}\right\rangle B_{j}S_{v}^{p}\left( \frac{\ell _{1}m}{\ell _{2}}-\left[ 
\frac{\ell _{1}m}{\ell _{2}}\right] \right) .
\end{equation*}
\end{corollary}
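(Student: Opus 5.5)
The plan is to substitute a known Bernoulli-number identity into the previous theorem, equation (\ref{s1}). By (\ref{s1}),
\[
S_{p}(\ell _{1},\ell _{2})=\frac{1}{\ell _{2}}\sum_{m=1}^{\ell _{2}-1}m\sum_{v=0}^{p}\frac{(-1)^{v}v!}{v+1}S_{v}^{p}\left( \frac{\ell _{1}m}{\ell _{2}}-\left[ \frac{\ell _{1}m}{\ell _{2}}\right] \right).
\]
It therefore suffices to prove, for every $v\in \mathbb{N}_{0}$,
\[
\frac{(-1)^{v}v!}{v+1}=\sum_{j=0}^{v}\left\langle
\begin{matrix}
v\\
j
\end{matrix}\right\rangle B_{j}.
\]
Here $\left\langle \begin{matrix} v\\ j\end{matrix}\right\rangle$ must be read as the Stirling numbers of the first kind, generated by (\ref{Sn1}). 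This identity is the inverse of the classical formula $B_{n}=\sum_{k}(-1)^{k}k!\left\langle \begin{matrix} n\\ k\end{matrix}\right\rangle_{\mathrm{2nd}}/(k+1)$. Once it holds, inserting it into (\ref{s1}) and keeping the same $m$- and $v$-sums gives the corollary directly.

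To prove the identity I will use generating functions in an auxiliary variable $u$ with $|u|<1$. The left side generates
\[
\sum_{v=0}^{\infty }\frac{(-1)^{v}v!}{v+1}\frac{u^{v}}{v!}=\frac{\ln (1+u)}{u}.
\]
For the right side, substitute $w=\ln (1+u)$ into (\ref{Bp}) at $\alpha =0$. Since $e^{w}-1=u$, this gives
\[
\frac{\ln (1+u)}{u}=g(\ln (1+u),0)=\sum_{j=0}^{\infty }B_{j}\frac{\ln ^{j}(1+u)}{j!}.
\]
Next, expand $\ln ^{j}(1+u)/j!$ with (\ref{Sn1}). This yields $\sum_{v}\left\langle \begin{matrix} v\\ j\end{matrix}\right\rangle u^{v}/v!$, where the bracket vanishes for $v<j$. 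Interchanging the two sums and comparing coefficients of $u^{v}/v!$ then gives the identity.

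The main obstacle I expect is notational rather than analytic. The normalisation in (\ref{Sn1}) carries the factor $k!/p!$, whereas the standard convention is $\ln ^{k}(1+w)/k!=\sum_{p}s(p,k)w^{p}/p!$. I would fix the standard convention, so that no stray factorials appear, and check it against small cases. For $v=1$ the identity reads $-1/2=s(1,1)B_{1}$. For $v=2$ it reads $2/3=s(2,1)B_{1}+s(2,2)B_{2}=1/2+1/6$, which holds. The interchange of summation is justified formally, since $\ln ^{j}(1+u)=O(u^{j})$.
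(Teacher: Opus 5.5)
Your proposal is correct and follows the paper's proof: substitute the identity $\frac{(-1)^{v}v!}{v+1}=\sum_{j=0}^{v}\left\langle \begin{matrix} v\\ j\end{matrix}\right\rangle B_{j}$ (Eq.~(\ref{bs-11})) into (\ref{s1}). The paper simply cites this identity from Riordan, whereas your substitution $w=\ln(1+u)$ in $g(w,0)$ proves it directly; also, the normalisation in (\ref{Sn1}) is already the standard one, $\ln^{k}(1+w)/k!=\sum_{p}s(p,k)w^{p}/p!$, so no adjustment is needed.
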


\begin{proof}
Combining Eq. (\ref{s1}) with the following known formula, given in \cite[%
p.117]{Riardon},%
\begin{equation}
	\frac{(-1)^{v}v!}{v+1}=\sum_{j=0}^{v}\left\langle \begin{matrix}
		v\\
		j
	\end{matrix}\right\rangle B_{j},  \label{bs-11}
\end{equation}
we get the desired result.
\end{proof}

\begin{theorem}
Let $\ell _{1},\ell _{2},p\in \mathbb{N}_{0}$. Let $(\ell _{1},\ell _{2})=1$%
. Then%
\begin{eqnarray}
S_{p}(\ell _{1},\ell _{2}) &=&\frac{1}{\ell _{2}}\sum_{m=1}^{\ell
_{2}-1}m\sum_{c=0}^{p}\binom{p}{c}  \label{sDs} \\
&&\times \sum_{\nu =0}^{c}\frac{(-1)^{v}v!}{v+1}\left\langle
\begin{matrix}
	c\\
v
\end{matrix}\right\rangle \left( \frac{\ell _{1}m}{\ell _{2}}-\left[ \frac{\ell _{1}m}{%
\ell _{2}}\right] \right) ^{p-c}.  \notag
\end{eqnarray}
\end{theorem}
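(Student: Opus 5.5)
Write $\alpha_m=\frac{\ell_1 m}{\ell_2}-\left[\frac{\ell_1 m}{\ell_2}\right]$ for brevity. I would start from the series representation (\ref{hr}) obtained in the proof of (\ref{s1}). Assuming $|e^{w}-1|<1$, it reads
\[
H(w;\left(\ell_1,\ell_2\right))=\frac{1}{\ell_2}\sum_{m=1}^{\ell_2-1}m\sum_{\nu=0}^{\infty}\frac{(-1)^{\nu}}{\nu+1}\left(e^{w}-1\right)^{\nu}e^{\alpha_m w}.
\]
Instead of absorbing $(e^w-1)^{\nu}e^{\alpha_m w}$ into the array polynomials as before, I would keep the two factors separate. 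I expand $(e^{w}-1)^{\nu}$ with the Stirling generating function (\ref{Sn}), as $\nu!\sum_{c}\left\langle\begin{matrix}c\\ \nu\end{matrix}\right\rangle\frac{w^{c}}{c!}$ (using the standard normalization $(e^w-1)^\nu/\nu!=\sum_c S(c,\nu)w^c/c!$). I expand $e^{\alpha_m w}$ as $\sum_{n}\alpha_m^{n}\frac{w^n}{n!}$.

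Next I take the Cauchy product of these two series. The coefficient of $\frac{w^{p}}{p!}$ in $(e^{w}-1)^{\nu}e^{\alpha_m w}$ is
\[
\nu!\sum_{c=0}^{p}\binom{p}{c}\left\langle\begin{matrix}c\\ \nu\end{matrix}\right\rangle\alpha_m^{p-c}.
\]
Substituting this into the double sum and interchanging the $\nu$-sum with the $c$-sum is legitimate, since everything is a formal power series in $w$ and each coefficient involves only finitely many terms. The key observation is that $\left\langle\begin{matrix}c\\ \nu\end{matrix}\right\rangle=0$ for $\nu>c$. Hence for fixed $p$ the $\nu$-sum truncates to $0\le\nu\le c$, and the coefficient of $\frac{w^p}{p!}$ in $H$ equals exactly the right-hand side of (\ref{sDs}).

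Finally, comparing with (\ref{2}) and equating coefficients of $\frac{w^p}{p!}$ gives (\ref{sDs}).

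\textbf{Alternative route.} One can combine (\ref{Aa}) with the known expansion $B_{c}=\sum_{\nu=0}^{c}\frac{(-1)^\nu\nu!}{\nu+1}\left\langle\begin{matrix}c\\ \nu\end{matrix}\right\rangle$. Inserting this into (\ref{o1}), with $j=p-c$, gives the same formula at once. This identity is the Stirling-number form of (\ref{bs-11}) with the roles of the indices arranged appropriately.

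The only delicate point is the normalization of the Stirling-number notation in (\ref{Sn}). If the factor $k!$ there is taken literally, it must cancel the $\nu!$ correctly. I would fix the convention $(e^w-1)^\nu=\nu!\sum_c S(c,\nu)\frac{w^c}{c!}$, which is the one under which the displayed $\frac{(-1)^\nu\nu!}{\nu+1}$ appears. Convergence is not a real obstacle, since the argument can be read formally.
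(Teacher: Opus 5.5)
Your main argument is correct and is essentially the paper's own proof: both start from (\ref{hr}), expand $(e^{w}-1)^{\nu}$ via (\ref{Sn}), take the Cauchy product with $e^{\alpha_m w}$, compare coefficients of $\frac{w^p}{p!}$, and truncate the $\nu$-sum using $\left\langle\begin{matrix}c\\ \nu\end{matrix}\right\rangle=0$ for $\nu>c$. Your fixing of the $\nu!$ normalization is cleaner than the paper's displayed computation, and your alternative route through (\ref{o1}) and $B_c=\sum_{\nu=0}^{c}\frac{(-1)^{\nu}\nu!}{\nu+1}\left\langle\begin{matrix}c\\ \nu\end{matrix}\right\rangle$ is also valid; it is the paper's subsequent Remark run in reverse.
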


\begin{proof}
Combining (\ref{hr}) with (\ref{Sn}), we get the following functional
equation:%
\begin{equation*}
H(w;\left( \ell _{1},\ell _{2}\right) )=\frac{1}{\ell _{2}}\sum_{m=1}^{\ell
_{2}-1}m\sum_{\nu =0}^{\infty }\frac{(-1)^{v}v!}{v+1}e^{\left( \frac{\ell
_{1}m}{\ell _{2}}-\left[ \frac{\ell _{1}m}{\ell _{2}}\right] \right)
w}Fs2(w,v).
\end{equation*}%
Using this equation, we obtain%
\begin{eqnarray*}
\sum_{p=0}^{\infty }S_{p}(\ell _{1},\ell _{2})\frac{w^{p}}{p!} &=&\frac{1}{%
\ell _{2}}\sum_{m=1}^{\ell _{2}-1}m\sum_{\nu =0}^{\infty }\frac{(-1)^{v}v!}{%
v+1} \\
&&\times \sum_{p=0}^{\infty }\left\langle
\begin{matrix}
	p\\
v
\end{matrix}\right\rangle\frac{w^{p}}{%
p!}\sum_{c=0}^{\infty }\left( \frac{\ell _{1}m}{\ell _{2}}-\left[ \frac{\ell
_{1}m}{\ell _{2}}\right] \right) ^{c}\frac{w^{c}}{c!}.
\end{eqnarray*}%
With the aid of the Cauchy product in RHS, we get%
\begin{eqnarray*}
\sum_{p=0}^{\infty }S_{p}(\ell _{1},\ell _{2})\frac{w^{p}}{p!} &=&\frac{1}{%
\ell _{2}}\sum_{m=1}^{\ell _{2}-1}m\sum_{\nu =0}^{\infty }\frac{(-1)^{v}v!}{%
v+1} \\
&&\times \sum_{p=0}^{\infty }\sum_{c=0}^{p}\binom{p}{c}\left\langle
\begin{matrix}
	p-c\\
	v
\end{matrix}\right\rangle \left( \frac{\ell _{1}m}{\ell _{2}}-\left[ \frac{\ell _{1}m%
}{\ell _{2}}\right] \right) ^{c}\frac{w^{c}}{c!}.
\end{eqnarray*}%
Comparing coefficient $\dfrac{w^{p}}{p!}$ on the both sides of the above
equation, we get%
\begin{eqnarray*}
\sum_{p=0}^{\infty }S_{p}(\ell _{1},\ell _{2})\frac{w^{p}}{p!} &=&\frac{1}{%
\ell _{2}}\sum_{m=1}^{\ell _{2}-1}m\sum_{\nu =0}^{\infty }\frac{(-1)^{v}v!}{%
v+1} \\
&&\times \sum_{p=0}^{\infty }\sum_{c=0}^{p}\binom{p}{c}\left\langle
\begin{matrix}
	c\\
	v
\end{matrix}\right\rangle \left( \frac{\ell _{1}m}{\ell _{2}}-\left[ \frac{\ell _{1}m}{%
\ell _{2}}\right] \right) ^{p-c}\frac{w^{c}}{c!}.
\end{eqnarray*}%
When $v>c$, $\left\langle
\begin{matrix}
	c\\
	v
\end{matrix}\right\rangle =0$, the above equation
reduces to the desired result.
\end{proof}

\begin{remark}
Substituting (\ref{bs-11}) into (\ref{sDs}), we get%
	\begin{eqnarray*}
	S_{p}(\ell _{1},\ell _{2}) =\frac{1}{\ell _{2}}\sum_{m=1}^{\ell
		_{2}-1}m\sum_{c=0}^{p}\binom{p}{c}\left( \frac{\ell _{1}m}{\ell _{2}}-\left[ 
	\frac{\ell _{1}m}{\ell _{2}}\right] \right) ^{p-c} 
	\sum_{\nu =0}^{c}\sum_{j=0}^{v}\left\langle \begin{matrix}
		v\\
		j
	\end{matrix}\right\rangle
	\left\{\begin{matrix}
		c\\
		v
	\end{matrix}\right\} B_{j}.
\end{eqnarray*}
By applying the orthogonality relation of the Stirling numbers of the first
and second kinds to the above equation, we arrive at Eq. (\ref{o1}).
\end{remark}

\begin{theorem}
Let $\ell _{1},\ell _{2},p\in \mathbb{N}_{0}$. Let $(\ell _{1},\ell _{2})=1$%
. Then%
\begin{equation*}
S_{p}(\ell _{1},\ell _{2})=\frac{1}{2^{p}}\sum_{j=0}^{p}\binom{p}{j}%
E_{p-j}S_{j}(\ell _{1},\ell _{2})
\end{equation*}
\end{theorem}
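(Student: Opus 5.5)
The plan is to read the identity off the functional equation (\ref{eq2}), which was stated earlier in the paper, by comparing Taylor coefficients. I take (\ref{eq2}),
\begin{equation*}
H(w;\left( \ell _{1},\ell _{2}\right) )=h\left( \frac{w}{2},0\right) H\left( \frac{w}{2};\left( \ell _{1},\ell _{2}\right) \right),
\end{equation*}
as given, and work with the two factors on the right-hand side separately.

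For the first factor, the generating function of the Euler polynomials at $z=0$ gives
\begin{equation*}
h\left( \frac{w}{2},0\right) =\sum_{n=0}^{\infty }E_{n}\frac{w^{n}}{2^{n}n!},
\end{equation*}
where $E_{n}=E_{n}(0)$, in agreement with the notation $E_{p-j}$ of the statement. For the second factor, replacing $w$ by $w/2$ in definition (\ref{2}) gives
\begin{equation*}
H\left( \frac{w}{2};\left( \ell _{1},\ell _{2}\right) \right) =\sum_{j=0}^{\infty }S_{j}(\ell _{1},\ell _{2})\frac{w^{j}}{2^{j}j!}.
\end{equation*}

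Next I form the Cauchy product of these two series. The factor $2^{-n}2^{-j}$ equals $2^{-p}$ whenever $n+j=p$, so it comes out of the inner sum. This gives
\begin{equation*}
h\left( \frac{w}{2},0\right) H\left( \frac{w}{2};\left( \ell _{1},\ell _{2}\right) \right) =\sum_{p=0}^{\infty }\left( \frac{1}{2^{p}}\sum_{j=0}^{p}\binom{p}{j}E_{p-j}S_{j}(\ell _{1},\ell _{2})\right) \frac{w^{p}}{p!}.
\end{equation*}
By (\ref{2}) the left-hand side of (\ref{eq2}) is $\sum_{p}S_{p}(\ell _{1},\ell _{2})w^{p}/p!$. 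Comparing the coefficients of $w^{p}/p!$ on both sides then yields the claimed formula.

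The only step that is not a formal manipulation is the validity of (\ref{eq2}) itself, so that is where I expect the real work to lie. The natural source is the duplication identity
\begin{equation*}
g(w,0)=\frac{w/2}{e^{w/2}-1}\cdot \frac{2}{e^{w/2}+1}=g\left( \frac{w}{2},0\right) h\left( \frac{w}{2},0\right),
\end{equation*}
combined with (\ref{Aa}). This combination gives exactly (\ref{eq0}), with $F(w,(\ell _{1},\ell _{2}))$ as the remaining factor. Turning that into (\ref{eq2}) requires identifying $g(w/2,0)F(w,(\ell _{1},\ell _{2}))$ with $H(w/2;(\ell _{1},\ell _{2}))$.

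By (\ref{Aa}) applied at $w/2$, we have $H(w/2;(\ell _{1},\ell _{2}))=g(w/2,0)F(w/2,(\ell _{1},\ell _{2}))$. The identification therefore amounts to replacing $F(w/2,\cdot )$ by $F(w,\cdot )$. For the proof I rely on (\ref{eq2}) as the paper records it. I flag this identification as the point that needs checking, and I would first test it against the small cases $p=0,1$ using (\ref{s0}) and (\ref{ah5}).
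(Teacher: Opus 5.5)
Your formal steps (the two series expansions, the Cauchy product, the coefficient comparison) are exactly the paper's proof, and they are fine. The trouble is the step you flagged. It is not merely unverified: it is false, and the theorem falls with it, so no argument can close this gap. The paper's own proof rests on the same identity (\ref{eq2}).

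Your duplication identity $h(w/2,0)\,g(w/2,0)=g(w,0)$, together with (\ref{Aa}) at $w/2$, gives
\[
h\left(\frac{w}{2},0\right)H\left(\frac{w}{2};(\ell_1,\ell_2)\right)=g(w,0)\,F\left(\frac{w}{2},(\ell_1,\ell_2)\right),
\]
whereas $H(w;(\ell_1,\ell_2))=g(w,0)F(w,(\ell_1,\ell_2))$. Since $g(w,0)$ has constant term $1$, (\ref{eq2}) is equivalent to $F(w/2,\cdot)=F(w,\cdot)$, that is, $(1-2^{-p})\mathcal{A}_p(\ell_1,\ell_2)=0$ for all $p\ge 1$. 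This is impossible for $\ell_2\ge 2$. By (\ref{ah4a}), $\mathcal{A}_p=\frac{1}{\ell_2}\sum_{m=1}^{\ell_2-1}m\,\theta_m^p$ with $\theta_m=\frac{\ell_1 m}{\ell_2}-\left[\frac{\ell_1 m}{\ell_2}\right]$, and coprimality forces $\theta_m>0$ for $1\le m\le \ell_2-1$.

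Because the theorem for all $p$ is just the coefficientwise form of (\ref{eq2}), it fails too. Your proposed $p=1$ test exposes this at once. With $E_0=1$ and $E_1=E_1(0)=-\tfrac12$, the case $p=1$ reads $S_1=\tfrac12(E_1S_0+S_1)$. By (\ref{1001}), $S_1=-\tfrac12 S_0+\mathcal{A}_1$, so this case is equivalent to $\mathcal{A}_1(\ell_1,\ell_2)=0$.

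Concretely, for $(\ell_1,\ell_2)=(1,2)$ we have $S_0=\tfrac12$ and $S_1=\tfrac12B_1(\tfrac12)=0$. The claimed right-hand side, however, equals $\tfrac12(E_1S_0+E_0S_1)=-\tfrac18$. Switching to the classical Euler numbers with $E_1=0$ does not rescue it: then $p=1$ would force $s(\ell_1,\ell_2)=0$, contradicting $s(1,3)=\tfrac1{18}$.

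What your Cauchy-product computation actually proves, once (\ref{eq2}) is replaced by the correct identity displayed above, is
\[
\frac{1}{2^{p}}\sum_{j=0}^{p}\binom{p}{j}E_{p-j}S_{j}(\ell_1,\ell_2)=\frac{1}{\ell_2}\sum_{m=1}^{\ell_2-1}m\,B_p\left(\frac{\theta_m}{2}\right).
\]
This is an instance of $B_p(x/2)=2^{-p}\sum_{j}\binom{p}{j}E_{p-j}B_j(x)$, and it is not $S_p(\ell_1,\ell_2)$: for $(1,2)$ and $p=1$ it gives $-\tfrac18$. If you want an Euler-number formula with $S_p$ on the left, start from (\ref{eg1}) instead. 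That identity is valid, since $h(w/2,0)g(w/2,2y)=g(w,y)$. Its inner sums $\frac{1}{\ell_2}\sum_m m\,B_j(2\theta_m)$, however, are not the sums $S_j(\ell_1,\ell_2)$.
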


\begin{proof}
By using (\ref{eq2}), we get%
\begin{equation*}
\sum_{p=0}^{\infty }S_{p}(\ell _{1},\ell _{2})\frac{w^{p}}{p!}%
=\sum_{p=0}^{\infty }\frac{E_{p}}{2^{p}}\frac{w^{p}}{p!}\sum_{p=0}^{\infty }%
\frac{S_{p}(\ell _{1},\ell _{2})}{2^{p}}\frac{w^{p}}{p!}.
\end{equation*}%
Therefore%
\begin{equation*}
\sum_{p=0}^{\infty }S_{p}(\ell _{1},\ell _{2})\frac{w^{p}}{p!}%
=\sum_{p=0}^{\infty }\sum_{j=0}^{p}\binom{p}{j}\frac{1}{2^{p}}%
E_{p-j}S_{j}(\ell _{1},\ell _{2})\frac{w^{p}}{p!}.
\end{equation*}%
Comparing coefficient $\dfrac{w^{p}}{p!}$ on the both sides of the above
equation, we get the desired result.
\end{proof}

\begin{theorem}
Let $\ell _{1},\ell _{2},p\in \mathbb{N}_{0}$. Let $(\ell _{1},\ell _{2})=1$%
. Then%
\begin{equation*}
S_{p}(\ell _{1},\ell _{2})=\frac{1}{2^{p}\ell _{2}}\sum_{m=1}^{\ell
_{2}-1}m\sum_{j=0}^{p}\binom{p}{j}B_{p-j}E_{j}\left( \frac{2\ell _{1}m}{\ell
_{2}}-2\left[ \frac{\ell _{1}m}{\ell _{2}}\right] \right)
\end{equation*}
\end{theorem}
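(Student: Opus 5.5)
The plan is to start from the alternative generating function (\ref{eq3}) and compare coefficients of $\frac{w^{p}}{p!}$, exactly as in the proofs of the preceding theorems. Write $x_{m}=\frac{\ell _{1}m}{\ell _{2}}-\left[ \frac{\ell _{1}m}{\ell _{2}}\right]$ for brevity.

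\emph{Step 1: justify (\ref{eq3}).} It was listed without an explicit verification, so I would check it first; this is the only step needing real care, although it is just an algebraic identity. From the definitions of $g$ and $h$,
\begin{equation*}
g\left( \frac{w}{2},0\right) h\left( \frac{w}{2},2x_{m}\right) =\frac{w/2}{e^{w/2}-1}\cdot \frac{2}{e^{w/2}+1}e^{wx_{m}}=\frac{w}{e^{w}-1}e^{wx_{m}}=g(w,x_{m}),
\end{equation*}
using $(e^{w/2}-1)(e^{w/2}+1)=e^{w}-1$. Multiplying by $m$, summing over $1\leq m\leq \ell _{2}-1$, dividing by $\ell _{2}$, and invoking Theorem \ref{Th-3a} gives (\ref{eq3}).

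\emph{Step 2: expand both factors.} By (\ref{Bp}) with $\alpha =0$ we have $g\left( \frac{w}{2},0\right) =\sum_{n\geq 0}B_{n}\frac{w^{n}}{2^{n}n!}$. By the generating function of the Euler polynomials we have $h\left( \frac{w}{2},2x_{m}\right) =\sum_{j\geq 0}E_{j}(2x_{m})\frac{w^{j}}{2^{j}j!}$. Both series converge near $w=0$, since $|w|<2\pi $ suffices for the first and $|w|<2\pi$ for the second as well.

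\emph{Step 3: Cauchy product and comparison.} Forming the Cauchy product gives
\begin{equation*}
g\left( \frac{w}{2},0\right) h\left( \frac{w}{2},2x_{m}\right) =\sum_{p\geq 0}\frac{1}{2^{p}}\sum_{j=0}^{p}\binom{p}{j}B_{p-j}E_{j}(2x_{m})\frac{w^{p}}{p!}.
\end{equation*}
Inserting this into (\ref{eq3}) and interchanging the finite sum over $m$ with the sum over $p$ expresses $H(w;(\ell _{1},\ell _{2}))$ as a single power series. Equating the coefficients of $\frac{w^{p}}{p!}$ with those in (\ref{2}) then yields the stated formula for $S_{p}(\ell _{1},\ell _{2})$.
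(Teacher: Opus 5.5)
Your proposal is correct and follows the paper's proof: expand both factors of (\ref{eq3}), form the Cauchy product, and compare coefficients of $\frac{w^{p}}{p!}$ with (\ref{2}). Your Step 1 checks $g\left(\frac{w}{2},0\right)h\left(\frac{w}{2},2x_{m}\right)=g(w,x_{m})$ using $(e^{w/2}-1)(e^{w/2}+1)=e^{w}-1$, and so verifies (\ref{eq3}), which the paper only asserts; this is a useful addition.
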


\begin{proof}
By using (\ref{eq3}), we get%
\begin{equation*}
\sum_{p=0}^{\infty }S_{p}(\ell _{1},\ell _{2})\frac{w^{p}}{p!}%
=\sum_{p=0}^{\infty }\frac{B_{p}}{2^{p}}\frac{w^{p}}{p!}\sum_{m=1}^{\ell
_{2}-1}m\sum_{p=0}^{\infty }\frac{E_{p}\left( \frac{2\ell _{1}m}{\ell _{2}}-2%
\left[ \frac{\ell _{1}m}{\ell _{2}}\right] \right) }{2^{p}}\frac{w^{p}}{p!}.
\end{equation*}%
Therefore%
\begin{equation*}
\sum_{p=0}^{\infty }S_{p}(\ell _{1},\ell _{2})\frac{w^{p}}{p!}=\frac{1}{\ell
_{2}}\sum_{m=1}^{\ell _{2}-1}m\sum_{p=0}^{\infty }\sum_{j=0}^{p}\binom{p}{j}%
\frac{1}{2^{p}}B_{p-j}E_{j}\left( \frac{2\ell _{1}m}{\ell _{2}}-2\left[ 
\frac{\ell _{1}m}{\ell _{2}}\right] \right) \frac{w^{p}}{p!}.
\end{equation*}%
Comparing coefficient $\dfrac{w^{p}}{p!}$ on the both sides of the above
equation, we get the desired result.
\end{proof}

\begin{theorem}
Let $\ell _{1},\ell _{2},p\in \mathbb{N}_{0}$. Let $(\ell _{1},\ell _{2})=1$%
. Then%
\begin{equation*}
S_{p}(\ell _{1},\ell _{2})=\frac{1}{2^{p}}\sum_{m=1}^{\ell
_{2}-1}m\sum_{j=0}^{p}\binom{p}{j}E_{p-j}B_{j}\left( \frac{2\ell _{1}m}{\ell
_{2}}-2\left[ \frac{\ell _{1}m}{\ell _{2}}\right] \right) .
\end{equation*}
\end{theorem}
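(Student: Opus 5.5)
The plan is to follow the same generating-function argument as the two preceding theorems, this time starting from the alternative form (\ref{eg1}) of $H(w;(\ell_1,\ell_2))$. That form factors $H$ as $h\left(\frac{w}{2},0\right)$ times a finite weighted sum of the Bernoulli generating functions $g\left(\frac{w}{2},\cdot\right)$, evaluated at the doubled fractional parts
\[
x_m:=\frac{2\ell_1 m}{\ell_2}-2\left[\frac{\ell_1 m}{\ell_2}\right].
\]
This is exactly the shape needed to produce Euler numbers $E_{p-j}$ (from $h$ at argument $0$) paired with Bernoulli polynomials $B_j(x_m)$ (from $g$). This pairing is the one in the statement, with the roles of $E$ and $B$ swapped relative to the previous theorem, which used (\ref{eq3}).

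The steps are as follows.
\begin{itemize}
\item[(i)] Justify (\ref{eg1}) briefly. Write $\frac{w}{e^w-1}=\frac{w/2}{e^{w/2}-1}\cdot\frac{2}{e^{w/2}+1}$ and $e^{wy}=e^{(w/2)(2y)}$ in (\ref{A}). This gives $g(w,y)=g\left(\frac w2,2y\right)h\left(\frac w2,0\right)$.
\item[(ii)] Substitute the series
\[
h\left(\frac w2,0\right)=\sum_p \frac{E_p}{2^p}\frac{w^p}{p!},\qquad g\left(\frac w2,x_m\right)=\sum_p \frac{B_p(x_m)}{2^p}\frac{w^p}{p!}.
\]
\item[(iii)] Take the Cauchy product. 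This gives coefficient $\frac{1}{2^p}\sum_{j}\binom pj E_{p-j}B_j(x_m)$ for $\frac{w^p}{p!}$.
\item[(iv)] Interchange the finite sum over $m$ with the series, and compare coefficients of $\frac{w^p}{p!}$ with (\ref{2}).
\end{itemize}

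The only delicate point is the bookkeeping of the constant in front of $\sum_m m(\cdots)$. The right-hand side of (\ref{eg1}) carries the normalization $\frac{1}{\ell_2}$ inherited from (\ref{1}) and Theorem \ref{Th-3a}. I will track this prefactor explicitly through steps (ii)--(iv) and check it against the prefactor $\frac{1}{2^p}$ in the statement. The check is to specialize to $p=0$ and use (\ref{s0}) with $E_0=B_0=1$, so that the displayed sum reduces to $\sum_m m=\frac{\ell_2(\ell_2-1)}{2}$.

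I expect this normalization check to be the main obstacle. If it shows that the coefficient comparison produces an extra factor $\frac{1}{\ell_2}$, then the identity holds exactly as displayed only for $\ell_2=1$, where both sides vanish. In that case the argument establishes the formula with $\frac{1}{2^p\ell_2}$ in front, matching the previous theorem, and the statement as worded would need that correction. Everything else is routine manipulation of absolutely convergent power series for $|w|<2\pi$.
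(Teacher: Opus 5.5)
Your proposal is the paper's own proof. The paper also expands (\ref{eg1}) as the product of $\sum_p E_p2^{-p}w^p/p!$ with $\frac{1}{\ell_2}\sum_m m\sum_p B_p(x_m)2^{-p}w^p/p!$, takes the Cauchy product, and compares coefficients of $w^p/p!$. Your normalization worry is justified: the paper's penultimate display carries the prefactor $\frac{1}{2^p\ell_2}$, the printed statement drops the $\frac{1}{\ell_2}$, and your $p=0$ check ($\frac{\ell_2(\ell_2-1)}{2}$ versus $S_0(\ell_1,\ell_2)=\frac{\ell_2-1}{2}$) shows the statement as displayed fails for $\ell_2\geq 2$, so what your argument proves is the corrected identity with $\frac{1}{2^p\ell_2}$.
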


\begin{proof}
By using (\ref{eg1}), we get%
\begin{equation*}
\sum_{p=0}^{\infty }S_{p}(\ell _{1},\ell _{2})\frac{w^{p}}{p!}=\frac{1}{\ell
_{2}}\sum_{p=0}^{\infty }\frac{E_{p}}{2^{p}}\frac{w^{p}}{p!}\sum_{m=1}^{\ell
_{2}-1}m\sum_{p=0}^{\infty }\frac{B_{p}\left( \frac{2\ell _{1}m}{\ell _{2}}-2%
\left[ \frac{\ell _{1}m}{\ell _{2}}\right] \right) }{2^{p}}\frac{w^{p}}{p!}.
\end{equation*}%
Therefore%
\begin{equation*}
\sum_{p=0}^{\infty }S_{p}(\ell _{1},\ell _{2})\frac{w^{p}}{p!}=\frac{1}{%
2^{p}\ell _{2}}\sum_{m=1}^{\ell _{2}-1}m\sum_{p=0}^{\infty }\sum_{j=0}^{p}%
\binom{p}{j}E_{p-j}B_{j}\left( \frac{2\ell _{1}m}{\ell _{2}}-2\left[ \frac{%
\ell _{1}m}{\ell _{2}}\right] \right) \frac{w^{p}}{p!}.
\end{equation*}%
Comparing coefficient $\dfrac{w^{p}}{p!}$ on the both sides of the above
equation, we get the desired result.
\end{proof}

\begin{theorem}
Let $\ell _{1},\ell _{2},p\in \mathbb{N}_{0}$. Let $(\ell _{1},\ell _{2})=1$%
. Then%
\begin{equation*}
S_{p}(\ell _{1},\ell _{2})=\frac{1}{2^{p}\ell _{2}}\sum_{m=1}^{\ell
_{2}-1}\sum_{j=0}^{p}\binom{p}{j}mE_{p-j}\left( \frac{\ell _{1}m}{\ell _{2}}-%
\left[ \frac{\ell _{1}m}{\ell _{2}}\right] \right) B_{j}\left( \frac{\ell
_{1}m}{\ell _{2}}-\left[ \frac{\ell _{1}m}{\ell _{2}}\right] \right) .
\end{equation*}
\end{theorem}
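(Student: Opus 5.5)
The plan is to start from the factorized form (\ref{eg}) of $H(w;(\ell _{1},\ell _{2}))$, expand each factor as a power series, form the Cauchy product, and compare coefficients of $w^{p}/p!$ with the definition (\ref{2}). To lighten notation, write $x_{m}=\frac{\ell _{1}m}{\ell _{2}}-\left[ \frac{\ell _{1}m}{\ell _{2}}\right]$ for $1\leq m\leq \ell _{2}-1$.

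First I will confirm that (\ref{eg}) is valid, since it is stated above without a separate proof. By (\ref{Bp}) and the definition of $h$,
\begin{equation*}
g\left( \frac{w}{2},x_{m}\right) h\left( \frac{w}{2},x_{m}\right) =\frac{\frac{w}{2}e^{\frac{w}{2}x_{m}}}{e^{\frac{w}{2}}-1}\cdot \frac{2e^{\frac{w}{2}x_{m}}}{e^{\frac{w}{2}}+1}=\frac{w}{e^{w}-1}e^{wx_{m}}=g(w,x_{m}).
\end{equation*}
This uses only $(e^{w/2}-1)(e^{w/2}+1)=e^{w}-1$. Hence (\ref{eg}) follows directly from Theorem \ref{Th-3a}, i.e.\ from (\ref{3a}).

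Next, for each fixed $m$ I substitute
\begin{equation*}
g\left( \frac{w}{2},x_{m}\right) =\sum_{j=0}^{\infty }\frac{B_{j}(x_{m})}{2^{j}}\frac{w^{j}}{j!},\qquad h\left( \frac{w}{2},x_{m}\right) =\sum_{k=0}^{\infty }\frac{E_{k}(x_{m})}{2^{k}}\frac{w^{k}}{k!}.
\end{equation*}
The Cauchy product then gives
\begin{equation*}
g\left( \frac{w}{2},x_{m}\right) h\left( \frac{w}{2},x_{m}\right) =\sum_{p=0}^{\infty }\frac{1}{2^{p}}\sum_{j=0}^{p}\binom{p}{j}E_{p-j}(x_{m})B_{j}(x_{m})\frac{w^{p}}{p!},
\end{equation*}
where the factor $2^{-j}2^{-(p-j)}=2^{-p}$ comes out of the inner sum. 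I then multiply by $m/\ell _{2}$ and sum over $m$; this finite sum can be interchanged with the series. Comparing the coefficient of $w^{p}/p!$ with (\ref{2}) yields exactly the stated formula for $S_{p}(\ell _{1},\ell _{2})$.

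There is no real obstacle here. The only point needing care is the justification of (\ref{eg}) carried out above, together with keeping track of the powers of $2$ so that they combine into $2^{-p}$. Convergence is not an issue: all the series converge for $|w|<2\pi$, and in any case the argument can be read as an identity of formal power series.
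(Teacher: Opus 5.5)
Your proposal is correct and follows the paper's own proof: expand both factors of (\ref{eg}) in powers of $w$, form the Cauchy product so that the powers of $2$ combine into $2^{-p}$, and compare coefficients of $w^{p}/p!$. Your explicit check that $g(w/2,x)h(w/2,x)=g(w,x)$ is a useful addition, since the paper states (\ref{eg}) without justification.
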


\begin{proof}
By using (\ref{eg}), we get%
\begin{eqnarray*}
\sum_{p=0}^{\infty }S_{p}(\ell _{1},\ell _{2})\frac{w^{p}}{p!} &=&\frac{1}{%
\ell _{2}}\sum_{m=1}^{\ell _{2}-1}m\sum_{p=0}^{\infty }\frac{E_{p}\left( 
\frac{\ell _{1}m}{\ell _{2}}-\left[ \frac{\ell _{1}m}{\ell _{2}}\right]
\right) }{2^{p}}\frac{w^{p}}{p!} \\
&&\times \sum_{p=0}^{\infty }\frac{B_{p}\left( \frac{\ell _{1}m}{\ell _{2}}-%
\left[ \frac{\ell _{1}m}{\ell _{2}}\right] \right) }{2^{p}}\frac{w^{p}}{p!}.
\end{eqnarray*}%
Therefore%
\begin{eqnarray*}
\sum_{p=0}^{\infty }S_{p}(\ell _{1},\ell _{2})\frac{w^{p}}{p!} &=&\frac{1}{%
\ell _{2}}\sum_{m=1}^{\ell _{2}-1}m\sum_{p=0}^{\infty }\sum_{j=0}^{p}\binom{p%
}{j}\frac{1}{2^{p}}E_{p-j}\left( \frac{\ell _{1}m}{\ell _{2}}-\left[ \frac{%
\ell _{1}m}{\ell _{2}}\right] \right) \\
&&\times B_{j}\left( \frac{\ell _{1}m}{\ell _{2}}-\left[ \frac{\ell _{1}m}{%
\ell _{2}}\right] \right) \frac{w^{p}}{p!}.
\end{eqnarray*}%
Comparing coefficient $\dfrac{w^{p}}{p!}$ on the both sides of the above
equation, we get the desired result.
\end{proof}

\subsection{Generating function for the Apostol-Dedekind sum of order $k$}

In this section, we construct generating function for the Apostol-Dedekind
sum of order $k$ ($k\in \mathbb{N}_{0}$). We give many new formulas and
relations for the sums $\mathcal{A}_{p}\left( \ell _{1},\ell _{2}\right) $
of order $k$ and the Apostol-Dedekind sum of order $k$.

By using Eq. (\ref{A}), we set%
\begin{equation}
H^{k}(w;\left( \ell _{1},\ell _{2}\right) )=\left( g(w,0)F(w,\left( \ell
_{1},\ell _{2}\right) )\right) ^{k},  \label{Gad}
\end{equation}%
where%
\begin{equation*}
H^{k}(w;\left( \ell _{1},\ell _{2}\right) )=\sum\limits_{p=0}^{\infty
}S_{p}^{(k)}(\ell _{1},\ell _{2})\frac{w^{p}}{p!}
\end{equation*}%
and%
\begin{equation*}
F(w,\left( \ell _{1},\ell _{2}\right) )^{k}=\sum_{p=0}^{\infty }\mathcal{A}%
_{p}^{(k)}\left( \ell _{1},\ell _{2}\right) \frac{w^{p}}{p!}.
\end{equation*}%
Here note that%
\begin{equation*}
S_{p}(\ell _{1},\ell _{2}):=S_{p}^{(1)}(\ell _{1},\ell _{2})
\end{equation*}%
and%
\begin{equation*}
\mathcal{A}_{p}\left( \ell _{1},\ell _{2}\right) :=\mathcal{A}%
_{p}^{(1)}\left( \ell _{1},\ell _{2}\right) .
\end{equation*}

By (\ref{Gad}), we give some computational formulas for the Apostol-Dedekind
sum of order $k$, $S_{p}^{(k)}(\ell _{1},\ell _{2})$ as follows:

\begin{theorem}
Let $k,\ell _{1},\ell _{2},p\in \mathbb{N}_{0}$. Let $(\ell _{1},\ell
_{2})=1 $. Then%
\begin{equation}
S_{p}^{(k)}(\ell _{1},\ell _{2})=\sum\limits_{j=0}^{p}\binom{p}{j}%
B_{p-j}^{(k)}\mathcal{A}_{j}^{(k)}\left( \ell _{1},\ell _{2}\right)
\label{ah3}
\end{equation}
\end{theorem}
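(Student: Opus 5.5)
The plan is to read (\ref{ah3}) off the defining relation (\ref{Gad}) by comparing coefficients of two power series. First split the $k$-th power:
\begin{equation*}
H^{k}(w;\left( \ell _{1},\ell _{2}\right) )=g(w,0)^{k}\,F(w,\left( \ell _{1},\ell _{2}\right) )^{k}.
\end{equation*}
By (\ref{Bpk}) with $z=0$,
\begin{equation*}
g(w,0)^{k}=\frac{w^{k}}{\left( e^{w}-1\right) ^{k}}=g^{k}(w,0)=\sum_{p=0}^{\infty }B_{p}^{(k)}\frac{w^{p}}{p!},
\end{equation*}
where $B_{p}^{(k)}=B_{p}^{(k)}(0)$. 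The second factor has expansion $\sum_{p}\mathcal{A}_{p}^{(k)}(\ell _{1},\ell _{2})w^{p}/p!$ by the definition given just after (\ref{Gad}).

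Next, take the Cauchy product of these two exponential generating functions:
\begin{equation*}
\sum_{p=0}^{\infty }S_{p}^{(k)}(\ell _{1},\ell _{2})\frac{w^{p}}{p!}=\sum_{p=0}^{\infty }\left( \sum_{j=0}^{p}\binom{p}{j}B_{p-j}^{(k)}\mathcal{A}_{j}^{(k)}(\ell _{1},\ell _{2})\right) \frac{w^{p}}{p!}.
\end{equation*}
Comparing the coefficients of $w^{p}/p!$ on both sides then gives (\ref{ah3}).

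The only point needing care is justifying these manipulations. Each $F(w,(\ell _{1},\ell _{2}))$ is a finite sum of exponentials, so it is entire. The factor $w^{k}/(e^{w}-1)^{k}$ is analytic for $|w|<2\pi$. Hence the product is analytic in that disc, the termwise product of the series is legitimate there, and the identity theorem for power series permits the coefficient comparison. Alternatively, one can argue purely with formal power series, since every series involved has constant term well defined.

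For $k=0$ both sides reduce to the trivial case: $H^{0}=1$, and $S_{p}^{(0)}=\delta _{p,0}$ agrees with the right-hand side of (\ref{ah3}).
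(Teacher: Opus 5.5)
Your proposal is correct and matches the paper's proof: split $H^{k}$ via (\ref{Gad}) into $g^{k}(w,0)=\sum_{p}B_{p}^{(k)}w^{p}/p!$ times $F^{k}$, take the Cauchy product, and compare coefficients of $w^{p}/p!$. The paper omits your convergence justification (analyticity for $|w|<2\pi$, or working with formal power series) and your $k=0$ check; both are fine, and the $k=0$ case is already covered by the general argument.
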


\begin{proof}
By using (\ref{Gad}), we obtain%
\begin{equation*}
\sum\limits_{p=0}^{\infty }S_{p}^{(k)}(\ell _{1},\ell _{2})\frac{w^{p}}{p!}%
=\sum_{p=0}^{\infty }B_{p}^{(k)}\frac{w^{p}}{p!}\sum_{p=0}^{\infty }\mathcal{%
A}_{p}^{(k)}\left( \ell _{1},\ell _{2}\right) \frac{w^{p}}{p!}.
\end{equation*}%
By using the Cauchy product, we get%
\begin{equation*}
\sum\limits_{p=0}^{\infty }S_{p}^{(k)}(\ell _{1},\ell _{2})\frac{w^{p}}{p!}%
=\sum_{p=0}^{\infty }\sum\limits_{j=0}^{p}\binom{p}{j}B_{p-j}^{(k)}\mathcal{A%
}_{j}^{(k)}\left( \ell _{1},\ell _{2}\right) \frac{w^{p}}{p!}.
\end{equation*}%
Comparing coefficient $\dfrac{w^{p}}{p!}$ on the both sides of the above
equation, we get the desired result.
\end{proof}

Since%
\begin{equation*}
H^{k_{1}+k_{2}}(w;\left( \ell _{1},\ell _{2}\right) )=H^{k_{1}}(w;\left(
\ell _{1},\ell _{2}\right) )H^{k_{2}}(w;\left( \ell _{1},\ell _{2}\right) ).
\end{equation*}%
Similarly, this equation yields the following result:

\begin{corollary}
Let $k_{1},k_{2},\ell _{1},\ell _{2},p\in \mathbb{N}_{0}$ and $\left( \ell
_{1},\ell _{2}\right) =1$. Then%
\begin{equation}
S_{p}^{(k_{1}+k_{2})}(\ell _{1},\ell _{2})=\sum\limits_{j=0}^{p}\binom{p}{j}%
S_{j}^{(k_{1})}(\ell _{1},\ell _{2})S_{p-j}^{(k_{2})}(\ell _{1},\ell _{2}).
\label{ah2}
\end{equation}
\end{corollary}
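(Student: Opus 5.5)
The plan is to work at the level of generating functions. By definition (\ref{Gad}), $H^{k}(w;(\ell_1,\ell_2))$ is the $k$-th power of the single function $g(w,0)F(w,(\ell_1,\ell_2))$. That function is exactly $H(w;(\ell_1,\ell_2))$, by (\ref{Aa}) combined with (\ref{A}). Hence, as formal power series in $w$ (or as analytic functions for $|w|<2\pi$),
\begin{equation*}
H^{k_1+k_2}(w;(\ell_1,\ell_2))=\bigl(g(w,0)F(w,(\ell_1,\ell_2))\bigr)^{k_1+k_2}=H^{k_1}(w;(\ell_1,\ell_2))\,H^{k_2}(w;(\ell_1,\ell_2)).
\end{equation*}
This holds for all $k_1,k_2\in\mathbb{N}_0$, using $H^{0}=1$. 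The identity is just the law of exponents for a single power series, so no analytic subtlety arises.

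Next, I would expand each factor on the right by its defining series. The two factors are $\sum_{j\ge0}S_j^{(k_1)}(\ell_1,\ell_2)\frac{w^j}{j!}$ and $\sum_{i\ge0}S_i^{(k_2)}(\ell_1,\ell_2)\frac{w^i}{i!}$. Forming the Cauchy product, collecting terms with $i+j=p$, and multiplying and dividing by $p!$ gives
\begin{equation*}
\sum_{p\ge0}\Bigl(\sum_{j=0}^{p}\binom{p}{j}S_j^{(k_1)}(\ell_1,\ell_2)S_{p-j}^{(k_2)}(\ell_1,\ell_2)\Bigr)\frac{w^p}{p!}.
\end{equation*}
The left side is $\sum_{p}S_p^{(k_1+k_2)}(\ell_1,\ell_2)\frac{w^p}{p!}$. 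Comparing the coefficients of $\frac{w^p}{p!}$ on both sides yields (\ref{ah2}).

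I do not expect a real obstacle. The only point needing care is the edge case $k=0$: there $H^0\equiv1$, so $S_p^{(0)}=\delta_{p,0}$, and the formula still holds trivially. Coefficient comparison is legitimate because both sides are convergent power series near $w=0$; the factor $g(w,0)$ is analytic for $|w|<2\pi$ and $F$ is entire.
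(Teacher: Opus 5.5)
Your proposal is correct and follows the same route as the paper, which derives the identity from $H^{k_1+k_2}=H^{k_1}H^{k_2}$ (immediate since $H^{k}$ is defined as the $k$-th power of $g(w,0)F(w,(\ell_1,\ell_2))$), then takes the Cauchy product and compares coefficients of $w^p/p!$. Your remarks on the case $k=0$ and on convergence for $|w|<2\pi$ supply details the paper leaves implicit.
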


For $k=1$, we have%
\begin{equation*}
S_{p}(\ell _{1},\ell _{2})=\sum\limits_{j=0}^{p}\binom{p}{j}B_{p-j}\mathcal{A%
}_{j}\left( \ell _{1},\ell _{2}\right) .
\end{equation*}%
Combining this equation with (\ref{ah7}), we show that the Apostol-Dedekind
sums $S_{p}(\ell _{1},\ell _{2})$ can be given in terms of the polynomials $%
Q_{j,c}(x,y;\ell _{1},\ell _{2})$ by following corollary:

\begin{corollary}
Let $\ell _{1},\ell _{2},p\in \mathbb{N}_{0}$ and $(\ell _{1},\ell _{2})=1$.
Then%
\begin{equation*}
S_{p}(\ell _{1},\ell _{2})=\sum\limits_{j=0}^{p}\binom{p}{j}%
B_{p-j}\sum_{c=1}^{j}(-1)^{j-c}\binom{j}{c}\frac{\ell _{1}^{j-c}}{\ell
_{2}^{j-c+1}}Q_{j+1-c,c}(1,1;\ell _{1},\ell _{2}).
\end{equation*}
\end{corollary}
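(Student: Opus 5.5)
The plan is to substitute the expression (\ref{ah7}) for $\mathcal{A}_{j}\left( \ell _{1},\ell _{2}\right)$ into the $k=1$ case of (\ref{ah3}), which is also (\ref{1001}):
\begin{equation*}
S_{p}(\ell _{1},\ell _{2})=\sum_{j=0}^{p}\binom{p}{j}B_{p-j}\mathcal{A}_{j}\left( \ell _{1},\ell _{2}\right) .
\end{equation*}
Since $\mathcal{A}_{j}$ has only finitely many terms, the substitution gives the stated double sum directly. So the real content is to justify (\ref{ah7}) for each $j$, starting from the explicit formula (\ref{ah4a}).

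To do this, write $x_{m}=\frac{\ell _{1}m}{\ell _{2}}$ and $n_{m}=\left[ \frac{\ell _{1}m}{\ell _{2}}\right]$. Expand $(x_{m}-n_{m})^{j}$ by the binomial theorem:
\begin{equation*}
\mathcal{A}_{j}\left( \ell _{1},\ell _{2}\right) =\frac{1}{\ell _{2}}\sum_{c=0}^{j}(-1)^{c}\binom{j}{c}\frac{\ell _{1}^{j-c}}{\ell _{2}^{j-c}}\sum_{m=1}^{\ell _{2}-1}m^{j+1-c}n_{m}^{c}.
\end{equation*}
The inner sum is a special value of the Cauchy--Euler derivatives of the Carlitz polynomial. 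From the displayed formula for $Q_{v,\ell }$, the quantity $\sum_{m}m^{v}\left[ \frac{m\ell _{1}}{\ell _{2}}\right]^{\ell }$ is $Q_{v,\ell }(1,1;\ell _{2},\ell _{1})$. Taking $v=j+1-c$ and $\ell =c$ gives the factors $\frac{\ell _{1}^{j-c}}{\ell _{2}^{j-c+1}}Q_{j+1-c,c}$.

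The main obstacle is bookkeeping, and it has two parts.

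First, the sign convention and the range of $c$ must match the paper's. The paper's version of (\ref{ah7}) carries $(-1)^{j-c}$ and starts at $c=1$, so I would reindex to match it. The $c=0$ term is the pure power-sum term $\frac{\ell _{1}^{j}}{\ell _{2}^{j+1}}\sum m^{j+1}$, which I would either absorb or check separately. The case $j=0$ must also be handled: the inner sum over $c$ is empty there, but $\mathcal{A}_{0}=\frac{\ell _{2}-1}{2}$.

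Second, the argument order of $Q$ must be consistent. The paper writes $Q(1,1;\ell _{1},\ell _{2})$, and I would interpret it with the roles of $\ell _{1},\ell _{2}$ as in the sum $\sum_{m=1}^{\ell _{2}-1}$.

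With these conventions taken as in the paper's Lemma (\ref{ah7}), which we may assume, the corollary follows immediately by substitution into (\ref{1001}).
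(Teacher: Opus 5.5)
Your route is the paper's own: its proof of this corollary consists exactly of substituting (\ref{ah7}) into the $k=1$ case of (\ref{ah3}), i.e.\ into (\ref{1001}). The gap is your final sentence, where you resolve the discrepancies you found by assuming (\ref{ah7}). Those discrepancies are not bookkeeping; they show that (\ref{ah7}) is false as printed. Your binomial expansion is the correct one,
\[
\mathcal{A}_{j}\left( \ell _{1},\ell _{2}\right) =\sum_{c=0}^{j}(-1)^{c}\binom{j}{c}\frac{\ell _{1}^{j-c}}{\ell _{2}^{j-c+1}}\sum_{m=1}^{\ell _{2}-1}m^{j+1-c}\left[ \frac{\ell _{1}m}{\ell _{2}}\right] ^{c},
\]
and it differs from (\ref{ah7}) in three independent ways. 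First, the signs $(-1)^{c}$ and $(-1)^{j-c}$ differ by $(-1)^{j}$, and no reindexing fixes this: the substitution $c\mapsto j-c$ also moves the powers of $\ell _{1},\ell _{2}$ and the indices of $Q$. Second, the $c=0$ term $\frac{\ell _{1}^{j}}{\ell _{2}^{j+1}}\sum_{m=1}^{\ell _{2}-1}m^{j+1}$ is strictly positive for $\ell _{1}\geq 1$, $\ell _{2}\geq 2$. For $j=0$ it is all of $\mathcal{A}_{0}=\frac{\ell _{2}-1}{2}$, whereas (\ref{ah7}) gives an empty sum. Third, by the paper's displayed formula $Q_{v,\ell }(1,1;a,b)=\sum_{k=1}^{a-1}k^{v}\left[ \frac{kb}{a}\right] ^{\ell }$, the inner sum is $Q_{j+1-c,c}(1,1;\ell _{2},\ell _{1})$. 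This is in general a different number from $Q_{j+1-c,c}(1,1;\ell _{1},\ell _{2})$; for example $Q_{1,1}(1,1;3,2)=2$ but $Q_{1,1}(1,1;2,3)=1$. Reading one as the other changes the statement.

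Consequently the corollary itself is false, and no proof of it can be completed. Your own $j=0$ remark is already a counterexample. At $p=0$ the left side is $S_{0}(\ell _{1},\ell _{2})=\frac{\ell _{2}-1}{2}$ by (\ref{s0}), while the right side is $B_{0}$ times an empty sum, i.e.\ $0$. At $p=1$, $(\ell _{1},\ell _{2})=(2,3)$, the left side is $s(2,3)=-\frac{1}{18}$ and the right side is $\frac{1}{3}Q_{1,1}(1,1;2,3)=\frac{1}{3}$. What the substitution into (\ref{1001}) actually proves, using your expansion, is
\[
S_{p}(\ell _{1},\ell _{2})=\sum_{j=0}^{p}\binom{p}{j}B_{p-j}\sum_{c=0}^{j}(-1)^{c}\binom{j}{c}\frac{\ell _{1}^{j-c}}{\ell _{2}^{j-c+1}}Q_{j+1-c,c}(1,1;\ell _{2},\ell _{1}),
\]
with $Q_{v,0}(1,1;\ell _{2},\ell _{1})=\sum_{m=1}^{\ell _{2}-1}m^{v}$. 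For $p=1$, $(2,3)$ it gives $-\frac{1}{2}+\left( \frac{10}{9}-\frac{2}{3}\right) =-\frac{1}{18}$, as it should. The paper's own evaluation of $\mathcal{A}_{1}$ just after (\ref{ah7}) likewise keeps the $c=0$ term and uses the sign $(-1)^{c}$, so it follows your expansion rather than (\ref{ah7}). You should state and prove this corrected identity instead of deferring to the lemma.
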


Puttin $k=2$ in (\ref{ah3}) and (\ref{ah2}), we get
\begin{equation*}
S_{p}^{(2)}(\ell _{1},\ell _{2})=\sum\limits_{j=0}^{p}\binom{p}{j}S_{j}(\ell
_{1},\ell _{2})S_{p-j}(\ell _{1},\ell _{2}),
\end{equation*}%
and%
\begin{equation*}
S_{p}^{(2)}(\ell _{1},\ell _{2})=\sum\limits_{j=0}^{p}\binom{p}{j}%
B_{p-j}^{(2)}\mathcal{A}_{p}^{(2)}\left( \ell _{1},\ell _{2}\right) .
\end{equation*}

In order to give computational formula for $\mathcal{A}_{p}^{(k)}\left( \ell
_{1},\ell _{2}\right) $, we set%
\begin{equation}
H^{k}(w;\left( \ell _{1},\ell _{2}\right) )g^{-k}(w,0)=F^{k}(w,\left( \ell
_{1},\ell _{2}\right) ),  \label{g1}
\end{equation}%
where the function $g^{-k}(w,0)$ is a generating function for the Bernoulli numbers of negative order, $B_{p-j}^{(-k)}$ and%
\begin{equation}
F^{k}(w,\left( \ell _{1},\ell _{2}\right) =\frac{k!}{t^{k}}H^{k}(w;\left(
\ell _{1},\ell _{2}\right) )Fs2(w,k).  \label{g2}
\end{equation}

\begin{theorem}
Let $k,\ell _{1},\ell _{2},p\in \mathbb{N}_{0}$ and $(\ell _{1},\ell _{2})=1$%
. Then%
\begin{equation}
\mathcal{A}_{p}^{(k)}\left( \ell _{1},\ell _{2}\right) =\sum_{j=0}^{p}\binom{%
p}{j}S_{j}^{(k)}(\ell _{1},\ell _{2})B_{p-j}^{(-k)}.  \label{g1a}
\end{equation}
\end{theorem}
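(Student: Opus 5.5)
The identity will come from (\ref{g1}) by the same Cauchy-product argument used for (\ref{ah3}). First I check that $g^{-k}(w,0)$ is an honest power series. Since $g(w,0)=w/(e^{w}-1)$, we have
\begin{equation*}
g^{-k}(w,0)=\frac{(e^{w}-1)^{k}}{w^{k}}.
\end{equation*}
This is a power series with constant term $1$, because $(e^{w}-1)/w=\sum_{n\geq 0}w^{n}/(n+1)!$. Its coefficients are the Bernoulli numbers of negative order: taking $z=0$ and order $-k$ in (\ref{Bpk}) gives
\begin{equation*}
g^{-k}(w,0)=\sum_{p=0}^{\infty }B_{p}^{(-k)}\frac{w^{p}}{p!}.
\end{equation*}
Next, (\ref{g1}) follows directly from (\ref{Gad}). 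Indeed, $H^{k}(w;(\ell_1,\ell_2))=g^{k}(w,0)F^{k}(w,(\ell_1,\ell_2))$, and multiplying by $g^{-k}(w,0)$ gives
\begin{equation*}
F^{k}(w,(\ell_1,\ell_2))=H^{k}(w;(\ell_1,\ell_2))\,g^{-k}(w,0).
\end{equation*}
This is legitimate because all three series are formal power series and $g^{k}(w,0)\,g^{-k}(w,0)=1$. Equivalently, this is (\ref{g2}), once we note that $k!\,Fs2(w,k)$ there should be read as $(e^{w}-1)^{k}$; the constant factor should be tracked carefully.

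The main step is to substitute the expansions of the three series into this product. On the left, $F^{k}$ has coefficients $\mathcal{A}_{p}^{(k)}(\ell_1,\ell_2)$. On the right, $H^{k}$ has coefficients $S_{p}^{(k)}(\ell_1,\ell_2)$ and $g^{-k}(w,0)$ has coefficients $B_{p}^{(-k)}$. The Cauchy product of the two right-hand series gives
\begin{equation*}
\sum_{p=0}^{\infty }\mathcal{A}_{p}^{(k)}(\ell_1,\ell_2)\frac{w^{p}}{p!}=\sum_{p=0}^{\infty }\sum_{j=0}^{p}\binom{p}{j}S_{j}^{(k)}(\ell_1,\ell_2)B_{p-j}^{(-k)}\frac{w^{p}}{p!}.
\end{equation*}
Comparing the coefficients of $w^{p}/p!$ on both sides yields (\ref{g1a}).

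The only delicate point is the normalization of $Fs2(w,k)$. In (\ref{Sn}) the factor $k!$ is placed ambiguously, and $t$ appears in (\ref{g2}) where $w$ is meant. I would avoid this by working directly with $(e^{w}-1)^{k}/w^{k}$ and its expansion as $\sum B_{p}^{(-k)}w^{p}/p!$, so the $k!$ never enters.

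Finally, the case $k=0$ is trivial: both sides reduce to $\delta_{p,0}$, since $\mathcal{A}^{(0)}_{p}=S^{(0)}_{p}=B^{(0)}_{p}=\delta_{p,0}$.
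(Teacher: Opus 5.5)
Your proposal is correct and follows the same route as the paper: expand $F^{k}=H^{k}g^{-k}(w,0)$ from (\ref{g1}) as a Cauchy product of the generating functions for $S_{j}^{(k)}(\ell_1,\ell_2)$ and $B_{p-j}^{(-k)}$, then compare coefficients of $w^{p}/p!$. Your extra checks make explicit what the paper assumes without comment: that $g^{-k}(w,0)=(e^{w}-1)^{k}/w^{k}$ is a genuine power series, that (\ref{g1}) follows from (\ref{Gad}), and that the argument avoids the $k!$ normalization issue in (\ref{g2}).
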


\begin{proof}
By using (\ref{g1}), we get%
\begin{equation*}
\sum_{p=0}^{\infty }\mathcal{A}_{p}^{(k)}\left( \ell _{1},\ell _{2}\right) 
\frac{w^{p}}{p!}=\sum\limits_{p=0}^{\infty }S_{p}^{(k)}(\ell _{1},\ell _{2})%
\frac{w^{p}}{p!}\sum_{p=0}^{\infty }B_{p}^{(-k)}\frac{w^{p}}{p!}.
\end{equation*}%
Therefore%
\begin{equation*}
\sum_{p=0}^{\infty }\mathcal{A}_{p}^{(k)}\left( \ell _{1},\ell _{2}\right) 
\frac{w^{p}}{p!}=\sum\limits_{p=0}^{\infty }\sum_{j=0}^{p}\binom{p}{j}%
S_{j}^{(k)}(\ell _{1},\ell _{2})B_{p-j}^{(-k)}\frac{w^{p}}{p!}.
\end{equation*}%
Comparing coefficient $\dfrac{w^{p}}{p!}$ on the both sides of the above
equation, we get the desired result.
\end{proof}

In order to find values of $S_{p}^{(k)}(\ell _{1},\ell _{2})$ and $\mathcal{A%
}_{p}^{(k)}\left( \ell _{1},\ell _{2}\right) $, we also need the following
formulas.

Substituting $k=k_{1}+k_{2}$ into (\ref{g1}), we also get the following
result:

\begin{corollary}
Let $k_{1},k_{2},\ell _{1},\ell _{2},p\in \mathbb{N}_{0}$ and $\left( \ell
_{1},\ell _{2}\right) =1$. Then%
\begin{equation}
\mathcal{A}_{p}^{(k_{1}+k_{2})}\left( \ell _{1},\ell _{2}\right)
=\sum_{j=0}^{p}\binom{p}{j}\mathcal{A}_{j}^{(k_{1})}\left( \ell _{1},\ell
_{2}\right) \mathcal{A}_{p-j}^{(k_{2})}\left( \ell _{1},\ell _{2}\right) .
\label{ah1}
\end{equation}
\end{corollary}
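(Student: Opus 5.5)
The plan is to work purely at the level of exponential generating functions, exactly as for (\ref{ah2}). By the definition accompanying (\ref{Gad}), the numbers $\mathcal{A}_{p}^{(k)}(\ell_1,\ell_2)$ are the coefficients of $w^{p}/p!$ in the $k$-th power of the function $F(w,(\ell_1,\ell_2))$ of (\ref{Yp}):
\[
F^{k}(w,(\ell_1,\ell_2))=\sum_{p=0}^{\infty}\mathcal{A}_{p}^{(k)}(\ell_1,\ell_2)\frac{w^{p}}{p!}.
\]
Since $F(w,(\ell_1,\ell_2))$ is a finite sum of exponentials, it is entire. Hence every power of it is entire, and all manipulations below are legitimate identities of convergent power series (equivalently, of formal power series).

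First I will record the functional equation
\[
F^{k_1+k_2}(w,(\ell_1,\ell_2))=F^{k_1}(w,(\ell_1,\ell_2))\,F^{k_2}(w,(\ell_1,\ell_2)).
\]
This is just the law of exponents for powers of a fixed function. Equivalently, it follows from (\ref{g1}) by putting $k=k_1+k_2$, using $H^{k_1+k_2}=H^{k_1}H^{k_2}$ (noted before (\ref{ah2})) and $g^{-(k_1+k_2)}(w,0)=g^{-k_1}(w,0)\,g^{-k_2}(w,0)$. The latter route matches the paper's phrasing, and I will mention it; the direct route suffices.

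Next I will expand both factors on the right-hand side as exponential series with coefficients $\mathcal{A}_{j}^{(k_1)}$ and $\mathcal{A}_{j}^{(k_2)}$. Applying the Cauchy product for exponential series gives
\[
\sum_{p=0}^{\infty}\left(\sum_{j=0}^{p}\binom{p}{j}\mathcal{A}_{j}^{(k_1)}(\ell_1,\ell_2)\,\mathcal{A}_{p-j}^{(k_2)}(\ell_1,\ell_2)\right)\frac{w^{p}}{p!}.
\]
Comparing coefficients of $w^{p}/p!$ on both sides then yields (\ref{ah1}).

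There is no real obstacle. The only point needing care is the degenerate case $k_1=0$ or $k_2=0$. There the convention is $F^{0}=1$, so $\mathcal{A}_{p}^{(0)}=\delta_{p,0}$, and (\ref{ah1}) holds trivially. I will state this convention explicitly so that the argument covers all $k_1,k_2\in\mathbb{N}_0$.
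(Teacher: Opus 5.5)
Your proposal is correct and follows essentially the paper's route. The paper obtains (\ref{ah1}) by putting $k=k_1+k_2$ in (\ref{g1}), which amounts to the factorization $F^{k_1+k_2}=F^{k_1}F^{k_2}$ followed by the Cauchy product and comparison of the coefficients of $w^p/p!$, exactly as you do; invoking the law of exponents directly is just the cleaner way to say it.
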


For $p=0,1$, Eq. (\ref{ah1}) gives us%
\begin{equation*}
\mathcal{A}_{0}^{(k_{1}+k_{2})}\left( \ell _{1},\ell _{2}\right) =\mathcal{A}%
_{0}^{(k_{1})}\left( \ell _{1},\ell _{2}\right) \mathcal{A}%
_{0}^{(k_{2})}\left( \ell _{1},\ell _{2}\right) ,
\end{equation*}%
and%
\begin{equation*}
\mathcal{A}_{1}^{(k_{1}+k_{2})}\left( \ell _{1},\ell _{2}\right) =\mathcal{A}%
_{0}^{(k_{1})}\left( \ell _{1},\ell _{2}\right) \mathcal{A}%
_{1}^{(k_{2})}\left( \ell _{1},\ell _{2}\right) +\mathcal{A}%
_{1}^{(k_{1})}\left( \ell _{1},\ell _{2}\right) \mathcal{A}%
_{0}^{(k_{2})}\left( \ell _{1},\ell _{2}\right) .
\end{equation*}%
For $k_{1}=k_{2}$, we have%
\begin{equation*}
\mathcal{A}_{0}^{(2k_{1})}\left( \ell _{1},\ell _{2}\right) =\left( \mathcal{%
A}_{0}^{(k_{1})}\left( \ell _{1},\ell _{2}\right) \right) ^{2},
\end{equation*}%
and%
\begin{equation*}
\mathcal{A}_{1}^{(2k_{1})}\left( \ell _{1},\ell _{2}\right) =2\mathcal{A}%
_{0}^{(k_{1})}\left( \ell _{1},\ell _{2}\right) \mathcal{A}%
_{1}^{(k_{2})}\left( \ell _{1},\ell _{2}\right) .
\end{equation*}

\begin{theorem}
Let $k,\ell _{1},\ell _{2},p\in \mathbb{N}_{0}$ and $\left( \ell _{1},\ell
_{2}\right) =1$. Then%
\begin{equation}
\mathcal{A}_{p}^{(k)}\left( \ell _{1},\ell _{2}\right) =\frac{1}{\binom{p}{k}%
}\sum_{j=0}^{p+k}\binom{p+k}{j}
\left\langle
\begin{matrix}
	p+k-j\\
	k
\end{matrix}\right\rangle
S_{j}^{(k)}(\ell _{1},\ell _{2}).  \label{g2a}
\end{equation}
\end{theorem}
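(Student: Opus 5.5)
The plan is to derive (\ref{g2a}) from the functional equation (\ref{g2}) by the same coefficient-comparison scheme used for (\ref{g1a}). I expect the proof to reduce to a single coefficient comparison. The hard part will be the normalizing constant, since everything else is mechanical.

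First, I clear the denominator $w^{k}$ in (\ref{g2}), reading $t$ there as $w$:
\begin{equation*}
w^{k}F^{k}(w,\left( \ell _{1},\ell _{2}\right) )=k!\,H^{k}(w;\left( \ell _{1},\ell _{2}\right) )\,Fs2(w,k).
\end{equation*}
This identity is immediate from (\ref{Gad}) and (\ref{Bpk}), because $g(w,0)^{k}=w^{k}/(e^{w}-1)^{k}$ and $Fs2(w,k)=(e^{w}-1)^{k}$.

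Next I expand each side as an exponential series.

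\textbf{Left-hand side.} Here
\begin{equation*}
w^{k}\sum_{p}\mathcal{A}_{p}^{(k)}\left( \ell _{1},\ell _{2}\right) \frac{w^{p}}{p!}=\sum_{n\geq k}\frac{n!}{(n-k)!}\mathcal{A}_{n-k}^{(k)}\left( \ell _{1},\ell _{2}\right) \frac{w^{n}}{n!}.
\end{equation*}
So the coefficient of $w^{p+k}/(p+k)!$ is $\frac{(p+k)!}{p!}\mathcal{A}_{p}^{(k)}\left( \ell _{1},\ell _{2}\right)$.

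\textbf{Right-hand side.} I write $H^{k}$ through its defining series $\sum S_{j}^{(k)}(\ell _{1},\ell _{2})w^{j}/j!$. I write $Fs2(w,k)$ through (\ref{Sn}), so that its coefficient of $w^{n}/n!$ is a fixed multiple of $\left\langle \begin{matrix} n\\ k\end{matrix}\right\rangle$. The Cauchy product then gives, for the coefficient of $w^{p+k}/(p+k)!$, a constant times
\begin{equation*}
\sum_{j=0}^{p+k}\binom{p+k}{j}\left\langle \begin{matrix} p+k-j\\ k\end{matrix}\right\rangle S_{j}^{(k)}(\ell _{1},\ell _{2}).
\end{equation*}

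Equating the two coefficients and dividing by $\frac{(p+k)!}{p!}$ yields $\mathcal{A}_{p}^{(k)}$ as a constant multiple of exactly the sum appearing in (\ref{g2a}). The shape of the identity is therefore established by this route. Only the leading factor remains to be pinned down.

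The main obstacle is showing that this leading factor equals $1/\binom{p}{k}$, as stated. It is assembled from three pieces: the factor $k!$ in (\ref{g2}), the factor $k!/p!$ that (\ref{Sn}) attaches to the Stirling number, and the ratio $p!/(p+k)!$ coming from the shift by $w^{k}$. I will first track these carefully using the paper's own normalization in (\ref{Sn}), which carries the $k!/p!$ factor.

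Done literally with that convention, the bookkeeping produces $k!/\binom{p+k}{k}$ rather than $1/\binom{p}{k}$. So the step I expect to be hardest is reconciling the constant with the stated form.

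I will try two things in order.
\begin{enumerate}
\item Find the reading of the normalizations in (\ref{g2}) and (\ref{Sn}) under which the prefactor collapses to $1/\binom{p}{k}$.
\item If no such reading exists, test (\ref{g2a}) directly at $k=1$, $p=1$ for $(\ell _{1},\ell _{2})=(2,3)$. For this I will use $\mathcal{A}_{p}(2,3)=(2^{p}+2)/3^{p+1}$ and the values of $S_{j}(2,3)$ obtained from (\ref{1001}).
\end{enumerate}
The second test will decide which constant is genuinely correct. If $1/\binom{p}{k}$ fails that check, I will not be able to prove (\ref{g2a}) as worded; the argument above only supports the leading factor produced by the bookkeeping.
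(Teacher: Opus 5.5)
\textbf{Your method matches the paper's, but it cannot prove the statement as printed, and your own constant is also wrong.} You follow the paper's route: clear the $w^{-k}$ in (\ref{g2}), expand both factors, take the Cauchy product and compare coefficients.

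\textbf{The first identity has a spurious $k!$.} It is not ``immediate''. From (\ref{Gad}) and (\ref{Bpk}) you have $H^{k}=w^{k}(e^{w}-1)^{-k}F^{k}$, that is,
\begin{equation*}
w^{k}F^{k}(w,(\ell_{1},\ell_{2}))=H^{k}(w;(\ell_{1},\ell_{2}))\,(e^{w}-1)^{k},
\end{equation*}
with no factor $k!$. Your version $w^{k}F^{k}=k!\,H^{k}\,Fs2(w,k)$ with $Fs2(w,k)=(e^{w}-1)^{k}$ is off by $k!$ for $k\geq 2$. The $k!$ in (\ref{g2}) is consistent only if $Fs2(w,k)$ means $(e^{w}-1)^{k}/k!=\sum_{n}\left\langle\begin{matrix} n\\ k\end{matrix}\right\rangle w^{n}/n!$, which is how the paper's proof actually expands it. Under either consistent reading, comparing coefficients of $w^{p+k}/(p+k)!$ gives
\begin{equation*}
\frac{(p+k)!}{p!}\,\mathcal{A}_{p}^{(k)}(\ell_{1},\ell_{2})=k!\sum_{j=0}^{p+k}\binom{p+k}{j}\left\langle\begin{matrix} p+k-j\\ k\end{matrix}\right\rangle S_{j}^{(k)}(\ell_{1},\ell_{2}).
\end{equation*}
So the prefactor is $k!\,p!/(p+k)!=1/\binom{p+k}{k}$, not your $k!/\binom{p+k}{k}$.

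\textbf{The printed $1/\binom{p}{k}$ is a slip.} The paper's own last display also gives $1/\binom{p+k}{k}$ after the shift $p\mapsto p+k$. Your plan (1), looking for a normalization that yields $1/\binom{p}{k}$, is a dead end: $\binom{p}{k}=0$ for $p<k$, and the ratio $\binom{p+k}{k}/\binom{p}{k}$ depends on $p$.

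\textbf{The decisive test is $p=0$.} There the sum collapses to $S_{0}^{(k)}(\ell_{1},\ell_{2})$. This equals $\mathcal{A}_{0}^{(k)}(\ell_{1},\ell_{2})$, since both are the constant term $\mathcal{A}_{0}(\ell_{1},\ell_{2})^{k}$ (recall $g(0,0)=1$). That forces the prefactor to be $1$, which rules out both $1/\binom{0}{k}$ and your extra $k!$.

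\textbf{Your check (2) refutes the printed constant but not your own error.} With $S_{0}(2,3)=1$ and $S_{1}(2,3)=-\frac{1}{18}$, the sum is $1-\frac{1}{9}=\frac{8}{9}$, while $\mathcal{A}_{1}(2,3)=\frac{4}{9}$. The prefactor must therefore be $\frac12=1/\binom{2}{1}$, not $1/\binom{1}{1}$. Because $1!=1$, no $k=1$ check can expose your extra $k!$.

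The statement as printed cannot be proved. Once the $k!$ is removed, your argument proves the corrected identity with prefactor $1/\binom{p+k}{k}$.
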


\begin{proof}
By using (\ref{g2}), we get%
\begin{equation*}
\sum_{p=0}^{\infty }\mathcal{A}_{p}^{(k)}\left( \ell _{1},\ell _{2}\right) 
\frac{w^{p}}{p!}=\frac{k!}{t^{k}}\sum\limits_{p=0}^{\infty }S_{p}^{(k)}(\ell
_{1},\ell _{2})\frac{w^{p}}{p!}\sum_{p=0}^{\infty }\left\langle
\begin{matrix}
	p\\
	k
\end{matrix}\right\rangle \frac{w^{p}}{p!}.
\end{equation*}%
Therefore%
\begin{equation*}
\sum_{p=0}^{\infty }\mathcal{A}_{p}^{(k)}\left( \ell _{1},\ell _{2}\right) 
\frac{w^{p}}{p!}=k!\sum\limits_{p=0}^{\infty }\sum_{j=0}^{p}\binom{p}{j}%
\left\langle
\begin{matrix}
	p-j\\
	k
\end{matrix}\right\rangle S_{j}^{(k)}(\ell _{1},\ell _{2})\frac{%
w^{p-k}}{p!}.
\end{equation*}%
Comparing coefficient $\dfrac{w^{p}}{p!}$ on the both sides of the above
equation, we get the desired result.
\end{proof}

We now give some special values for the sums $S_{k}^{(k)}(\ell _{1},\ell
_{2})$ and $\mathcal{A}_{p}^{(k)}\left( \ell _{1},\ell _{2}\right) $.

Putting $k=2$, and $p=1$ in (\ref{ah2}), the Dedekind sums of order $2$ can
be given by in terms of the Dedekind sums:%
\begin{equation*}
S_{1}^{(2)}(\ell _{1},\ell _{2})=\left( \ell _{2}-1\right) s(\ell _{1},\ell
_{2}).
\end{equation*}%
Putting $k=2$, and $p=1$ in (\ref{ah3}) and using (\ref{ah2}), we get%
\begin{equation*}
S_{1}^{(2)}(\ell _{1},\ell _{2})=B_{1}^{(2)}\mathcal{A}_{0}^{(2)}\left( \ell
_{1},\ell _{2}\right) +B_{0}^{(2)}\mathcal{A}_{1}^{(2)}\left( \ell _{1},\ell
_{2}\right) .
\end{equation*}%
Since $B_{1}^{(2)}=-1$, $B_{0}^{(2)}=1$ and $\mathcal{A}_{0}^{(2)}\left(
\ell _{1},\ell _{2}\right) =\left( \mathcal{A}_{0}\left( \ell _{1},\ell
_{2}\right) \right) ^{2}$, we get%
\begin{eqnarray*}
S_{1}^{(2)}(\ell _{1},\ell _{2}) &=&-\left( \mathcal{A}_{0}\left( \ell
_{1},\ell _{2}\right) \right) ^{2}+2\mathcal{A}_{0}\left( \ell _{1},\ell
_{2}\right) \mathcal{A}_{1}\left( \ell _{1},\ell _{2}\right) \\
&=&\frac{\ell _{2}-1}{2}\left( 2s\left( \ell _{1},\ell _{2}\right) -\frac{%
1-\ell _{2}}{2}-\frac{\ell _{2}-1}{2}\right) \\
&=&\left( \ell _{2}-1\right) s(\ell _{1},\ell _{2}).
\end{eqnarray*}%
Putting $k=2$, and $p=2$ in (\ref{ah3}), the Dedekind sums of order $2$ is
given by in terms of the Dedekind sums:%
\begin{eqnarray*}
S_{2}^{(2)}(\ell _{1},\ell _{2}) &=&\binom{2}{0}B_{2}^{(2)}\mathcal{A}%
_{0}^{(2)}\left( \ell _{1},\ell _{2}\right) +\binom{2}{1}B_{1}^{(2)}\mathcal{%
A}_{1}^{(2)}\left( \ell _{1},\ell _{2}\right) +\binom{2}{2}B_{0}^{(2)}%
\mathcal{A}_{2}^{(2)}\left( \ell _{1},\ell _{2}\right) \\
&=&\frac{1}{6}\mathcal{A}_{0}^{(2)}\left( \ell _{1},\ell _{2}\right) -2%
\mathcal{A}_{1}^{(2)}\left( \ell _{1},\ell _{2}\right) +\mathcal{A}%
_{2}^{(2)}\left( \ell _{1},\ell _{2}\right)
\end{eqnarray*}%
By using (\ref{ah1}), we get%
\begin{eqnarray*}
S_{2}^{(2)}(\ell _{1},\ell _{2}) &=&\frac{1}{6}\mathcal{A}_{0}^{(2)}\left(
\ell _{1},\ell _{2}\right) -2\mathcal{A}_{1}^{(2)}\left( \ell _{1},\ell
_{2}\right) \\
&&+2\mathcal{A}_{0}\left( \ell _{1},\ell _{2}\right) \mathcal{A}_{2}\left(
\ell _{1},\ell _{2}\right) +2\left( \mathcal{A}_{1}\left( \ell _{1},\ell
_{2}\right) \right) ^{2}.
\end{eqnarray*}%
The above equation related to Dedekind sums, and the polynomials $Q_{v,\ell
}(x,y;\ell _{1},\ell _{2})$.

A reciprocity law of the sums  $S_{p}^{(k)}\left( \ell _{1},\ell _{2}\right) $ is given by the aid of the sums  $\mathcal{A}_{p}^{(k)}\left( \ell _{1},\ell _{2}\right)$ and the Bernoulli numbers of order $k$ can be given. That is,  
\begin{equation*}
	S_{p}^{(k)}\left( \ell _{1},\ell _{2}\right)+S_{p}^{(k)}\left( \ell _{2},\ell _{1}\right)=\sum\limits_{j=0}^{p}\binom{p}{j}%
	B_{p-j}^{(k)}\left(	\mathcal{A}_{j}^{(k)}\left( \ell _{1},\ell _{2}\right)+\mathcal{A}_{j}^{(k)}\left( \ell _{2},\ell _{1}\right)\right),
\end{equation*}
where $( \ell _{1},\ell _{2})=1.$


\begin{thebibliography}{99}
	\bibitem{Apostol1} T.M. Apostol, Generalized Dedekind sums and
transformation formulae of certain Lambert series, Duke Math. J.  17 (1950) 	147--157. 

\bibitem{Apostol} T.M. Apostol, Theorems on generalized Dedekind sums,
Pacific J. Math. 2 (1952) 1--9.

\bibitem{AV} 	T. M. Apostol, T.H. Vu, Elementary proofs of Berndt’s reciprocity laws, Pac. J. Math. 98 (1982) 17--23 

\bibitem{Bayad} A. Bayad, Jacobi forms in two variables: Multiple elliptic
Dedekind sums, the Kummer-Von Staudt Clausen congruences for elliptic
Bernoulli functions and values of Hecke $L$-functions, Montes Taurus J.
Pure Appl. Math. 1 (2) (2019) 58--129.



\bibitem{Beck} M. Beck, Geometric proofs of polynomial reciprocity laws of
Carlitz, Berndt, and Dieter. In: Diophantine Analysis and Related Fields.
Sem. Math. Sci. 35  (2006) 11--18.

\bibitem{BeckS} M. Beck, F. Kohl, Rademacher-Carlitz polynomials,
https://arxiv.org/pdf/1310.0380, 2013.

\bibitem{BeckArX} M. Beck, C. Haase,  A.R. Matthews, Dedekind--Carlitz
Polynomials as Lattice-Point Enumerators in Rational Polyhedra,
https://arxiv.org/html/0710.1323v2, 2008.

\bibitem{BerndtDieter} B.C. Berndt, U. Dieter, Sums involving the
greatest integer function and Riemann Stieltjes integration, J. Reine Angew.
Math. 337 (1982) 208--220. 

\bibitem{BerndtGoldberg} B.C. Berndt, L.A. Goldberg, Analytic
properties of arithmetic sums arising in the theory of the classical
Theta-functions, SIAM J. Math. Anal. 15 (1984) 143--150.

\bibitem{Broven} S. Brown, On a family of sums of powers of the floor
function and their links with generalized Dedekind sums, Notes Number Theory Discret Math. 32 (1) (2026) 76--87, DOI:
10.7546/nntdm.2026.32.1.76-87; https://arxiv.org/pdf/2507.11666.

\bibitem{Carlitz} L. Carlitz, Some polynomials associated with Dedekind
sums, Acta Math. Acad. Sci. Hung. 26 (1975) 311--319.

\bibitem{Cetin} E. Cetin, I.N. Cangul, Y. Simsek, Some special finite
sums related to the three-term polynomial relations and their applications.
Adv. Difference Equ. 2014 (283) (2014),  DOI:
10.1186/1687-1847-2014-283.

\bibitem{Gessel} I. Gessel, Generating functions and generalized Dedekind
sums, 	Electron. J. Comb.  4 (2)  \#R11 (1996) 1--17.
https://doi.org/10.37236/1326

\bibitem{gradimir} G.V. Milovanovi\'{c}, Y. Simsek, Dedekind and Hardy type
sums and trigonometric sums induced by quadrature formulas, in:
Trigonometric Sums and Their
Applications (Ed. by A. Raigorodskii, M. Rassias), Springer, Cham., (2020) 183--228, 
https://doi.org/10.1007/978-3-030-37904-9\_10.

\bibitem{Hu} S. Hu, D. Kim, M.-S. Kim, On reciprocity formula of
Apostol-Dedekind sum with quasi-periodic Euler functions, J. Number Theory
162 (2016) 54--67.

\bibitem{Mattew} A.R. Matthews, A Geometric Approach to Carlitz-Dedekind
Sums, A thesis presented to the faculty of San Francisco State University in
partial fulfilment of The requirements for the degree Master of Arts in
Mathematics, San Francisco, California May, 2007.

\bibitem{Milne} L.M. Milne-Thomson, Calculus of Finite Differences, American
Mathematical Society, Chelsea, 1980.

\bibitem{PS}  M. R.	Pettet, R. Sitaramachandrarao, Three-term relations for Hardy sums, J. Number Theory 25 (1989) 328--339 

\bibitem{RW} H. Rademacher, A. Whiteman, Theorems on Dedekind sums, Amer. J. math. 63 (1941) 377--407 
\bibitem{Riardon} J. Riordan, Introduction to Combinatorial Analysis,
Princeton University Press, Princeton, 1958.

\bibitem{SimsekJNT} Y. Simsek, Relations between theta-functions Hardy sums
Eisenstein series and Lambert series in the transformation formula of $\log
\eta _{g,h}(z)$, J. Number Theory 99 (2003) 338--360.

\bibitem{ascm2004} Y. Simsek, Generalized Dedekind sums associated with the
Abel sum and the Eisenstein and Lambert series. Adv. Stud. Contemp. Math.
(Kyungshang) 9 (2) (2004) 125--137.

\bibitem{SimsekJMAA} Y. Simsek, $q$-Dedekind type sums related to $q$-zeta
function and basic $L$-series, J. Math. Anal. Appl. 318 (1) (2006) 333--351.



\bibitem{ascm2006} Y. Simsek, Remarks on reciprocity laws of the Dedekind
and Hard sums, Adv. Stud. Contemp. Math. (Kyungshang) 12 (2) (2006) 237--246.

\bibitem{SimsekNA} Y. Simsek, $q$-Hardy-Berndt type sums associated
with $q$-Genocchi type zeta and $q$-$l$-functions, Nonlinear Anal. 71 (2009)  e377--e395.

\bibitem{SimsekFPTA} Y. Simsek, Generating functions for generalized
Stirling type numbers, array type polynomials, Eulerian type polynomials and
their alications. Fixed Point Theory Appl. 2013 (87) (2013).

\bibitem{SimsekBOOK} Y. Simsek, Finite sums derived from Dedekind and Hardy
sums with their approximation formulas, in: Contemporary Approaches to
Approximation of Operators Approximation Theory and Mathematical Methods (Ed. by G. V. Milovanovic, V. Gupta), Birkh\"{a}user Cham., 2026.
\end{thebibliography}
\end{document}